\theoremstyle{plain}
\newtheorem{theorem}{Theorem}
\newtheorem{Lemma}{Lemma}
\newtheorem{proposition}{Proposition}
\newtheorem{corollary}{Corollary}
\newtheorem{fact}{Fact}
\theoremstyle{definition}
\newtheorem{definition}{Definition}
\newtheorem{notation}{Notation}
\newcommand{\gpv}{\operatorname{GPV}}
\begin{document}
\begin{abstract}
Vassiliev introduced filtered invariants of knots using an unknotting operation, called crossing changes.    Goussarov, Polyak, and Viro introduced other filtered invariants of virtual knots, which order is called $\gpv$-order, using an unknotting operation, called virtualization.  We defined other filtered invariants, which order is called $F$-order, of virtual knots using an unknotting operation, called forbidden moves.  In this paper, we show that the set of virtual knot invariants of $F$-order $\le n+1$ is strictly stronger than that of $F$-order $\le n$ and that of $\gpv$-order $\le 2n+1$.  To obtain the result, we show that the set of virtual knot invariants of $F$-order $\le n$ contains every Goussarov-Polyak-Viro invariant of $\gpv$-order $\le 2n+1$, which implies that the set of virtual knot invariants of $F$-order is a complete invariant of classical and virtual knots.    
\end{abstract}
\author[N.~Ito]{Noboru Ito}
\address{Graduate School of Mathematical Sciences, The University of Tokyo, 3-8-1, Komaba, Meguro-ku, Tokyo, 153-8914, Japan}
\email{noboru@ms.u-tokyo.ac.jp}
\author[M.~Sakurai]{Migiwa Sakurai}
\address{Department of Materials Science and Engineering, College of Engineering, Shibaura Institute of Technology, 307 Fukasaku, Minuma-ku, Saitama-shi, Saitama, 337-8570, Japan}
\email{migiwa@shibaura-it.ac.jp}
\title[Higher-order finite type invariants of classical and virtual knots]{Higher-order finite type invariants of classical and virtual knots and unknotting operations}
\keywords{finite type invariants, knots, virtual knots, unknotting operations,
virtualizations, forbidden moves}

\maketitle
\section{Introduction}
In this paper, we discuss on higher-orders of filtered invariants which are complete invariants of classical and virtual knots and whose filtration was introduced by us \cite{IS} recently.  
In particular, we show that the set of virtual knot invariants of $F$-order $\le n+1$ is strictly stronger than those of $F$-order $\le n$.
Here, we say that a map is a complete invariant if it distinguishes any pair of (virtual) knots.    

Traditionally, a (classical) knot is a smoothly embedded circle in $\mathbb{R}^3$ and a virtual knot is a stable equivalence class of an image of a regular projection to a closed surface $\Sigma$ of an embedded circle in $\Sigma \times I$, where $I$ is an interval that is homeomorphic to $[0, 1]$ (for the definitions of classical knots and virtual knots, see Section~\ref{sec:pre} and for stable equivalences as above, see Carter-Kamada-Saito \cite{CKS}).  It is known that virtually isotopic classical knots are isotopic.  

In 1990, Vassiliev \cite{Va} introduced a filtered invariants by a standard unknotting operation, called \emph{crossing change}. 
For Vassiliev invariants, in 1990, Ohyama \cite{Ohyama1} introduced a notion of $n$-triviality and, in 1992, Taniyama \cite{Taniyama} generalized it to obtain a notion of $n$-similarity.  If a knot $K$ is $n$-similar to another knot $K'$, then the value of Vassiliev invariant of order $\le n-1$ of $K$ equals that of $K'$.  In 1990's, Habiro \cite{habiro} gave a notion corresponding to a generalized notion of $n$-similarity, introduced $C_k$-moves, and showed that the same Vassiliev invariants of order $\le k-1$ if and only if they are transformed into each other by $C_k$-moves.  Goussarov independently proved a similar result \cite{goussarov}.     

In 2000, Goussarov, Polyak, and Viro \cite{GPV} introduced another order and filtration by another unknotting operation, called \emph{virtualization}, for classical and virtual knots and using their theory, then we have another framework to obtain concrete Vassiliev invariants via dual spaces generated by oriented chord diagrams, called arrow diagrams.  We note that the set of invariants of Goussarov, Polyak, and Viro implies a complete invariant of classical and virtual knots.   In this paper, each virtual knot invariants given by Goussarov, Polyak, and Viro is called a {\it finite type invariant of $\gpv$-order}.  
In 2017, we introduced finite type invariants of $F$-order of virtual knots and a notion that corresponds to $n$-similarity, using another unknotting operation forbidden moves, where, in 2001, Kanenobu \cite{kanenobu2}, and independently Nelson \cite{nelson} showed that forbidden moves give an unknotting operation.   
In this paper, we discuss on relationships among finite type invariants of Vassiliev, Goussarov-Polyak-Viro, and us.  
  
A notion of $n$-trivialities is defined in the following.  Let $\mathcal{A}$ be a collection of $n$ pairwise disjoint, nonempty subset that consists of isolated sufficiently small disks on which unknotting operations are applied.  For every subset $T$ of the power set of $\mathcal{A}$, denote a knot diagram by $K(T)$ by applying an unknotting operation in each small disk in $T$.  A knot $K$ is said to be $n$-trivial if there exists $T$ such that $K(\emptyset)$ is a diagram of $K$ and $K(T)$ is a knot diagram of the unknot.  
By replacing the unknotting operation, crossing changes, with virtualization (forbidden moves, resp.), we have a notion of ``$n$-similarity'' corresponding to invariants of Goussarov-Polyak-Viro (our invariant, resp.) that is called $\gpv_n$-similar ($F_n$-similar, resp.).  

In this paper, we give infinitely many pairs of $n$-similar classical knots of each $\gpv$-order $n$.  We also obtain infinitely many pairs of $n$-similar virtual knots of each $F$-order $n$.
We show that if a virtual knot $K$ is $n$-similar to another virtual knot $K'$ by $F$-order, then $K$ is $n$-similar to $K'$ by $\gpv$-order.  

In this paper, we show that the set of invariants of $F$-order $\le n+1$ is strictly stronger than those of $F$-order $\le n$ and that of $\gpv$-order $\le 2n+1$.   To obtain the result, we show that the set of virtual knot invariants of $F$-order $\le n$ contains every Goussarov-Polyak-Viro invariant of $\gpv$-order $\le 2n+1$.  It implies that the set of virtual knot invariants of $F$-order is a complete invariant of classical and virtual knots
because it is known that the set of Goussarov-Polyak-Viro invariants are complete invariants of classical and virtual knots.  


\section{Preliminaries}
\label{sec:pre}
\subsection{Virtual knots and local moves}
\label{sub:DE}

Suppose that $f:\underbrace{{\mathbb S}^1 \sqcup \cdots \sqcup {\mathbb S}^1}_{\mu}  \rightarrow {\mathbb R}^3$ is a smooth embedding, and the image
$f(\underbrace{{\mathbb S}^1 \sqcup \cdots \sqcup {\mathbb S}^1}_{\mu})$ is called a $\mu$-{\it component link}.
We say that two $\mu$-component link diagrams $L_0$ and $L_1$ are \emph{isotopic}
if there exists an isotopy $h_t : {\mathbb R}^3 \rightarrow {\mathbb R}^3, t \in [0, 1]$, with $h_0 = \operatorname{id}$ and $h_1 (L_0)$ $=$ $L_1$.   
Let $L$ be a $\mu$-component link and $p$ a regular projection ${\mathbb R}^3 \rightarrow {\mathbb R}^2$, i.e., $p(L)$ is generic immersed plane closed curves.  In particular, every self-intersection of the image $p(L)$ is a transverse double point.  Then, the image $p(L) (\subset \mathbb{R}^2)$, up to plane isotopy, with over/under information of each double point is called a \emph{diagram} of $L$.
if $\mu =1$, a $\mu$-component link diagram is called a \emph{knot diagram} or a \emph{classical knot diagram}, and a $\mu$-component link is called a \emph{knot} or \emph{classical knot}.

We define a \emph{$\mu$-component virtual link diagram} as a $\mu$-component link diagram having some virtual crossings (right) as well as real crossings in Fig.~\ref{fig:crossings} (left). \emph{Two $\mu$-component virtual link diagrams are equivalent} if one can be transformed from the other by a finite sequence of \emph{generalized Reidemeister moves} in Fig.~\ref{fig:GRM}.  We call the equivalence class of $\mu$-component virtual link diagrams modulo the generalized Reidemeister moves a {\em $\mu$-component virtual link}. In particular, if $\mu =1$, a $\mu$-component virtual link diagram is called a \emph{virtual knot diagram}, and a $\mu$-component virtual link is called a \emph{virtual knot}.
A virtual knot diagram is pointed if it is endowed with a base point
that is not a double point.
Generalized Reidemeister moves away from the base point imply an equivalence relation for pointed virtual knot diagrams,
and each equivalence class is called a long virtual knot.

\begin{figure}[htbp]
\centering
\includegraphics[width=3cm,clip]{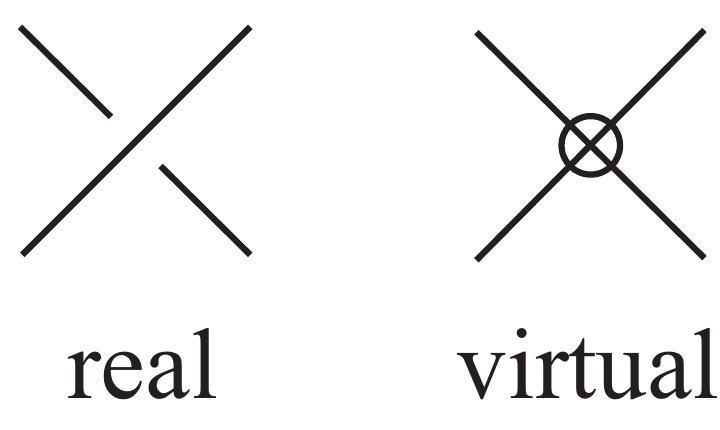}
\caption{Crossing types.}
\label{fig:crossings}
\end{figure}

\begin{figure}[htbp]
\centering
\includegraphics[width=9cm,clip]{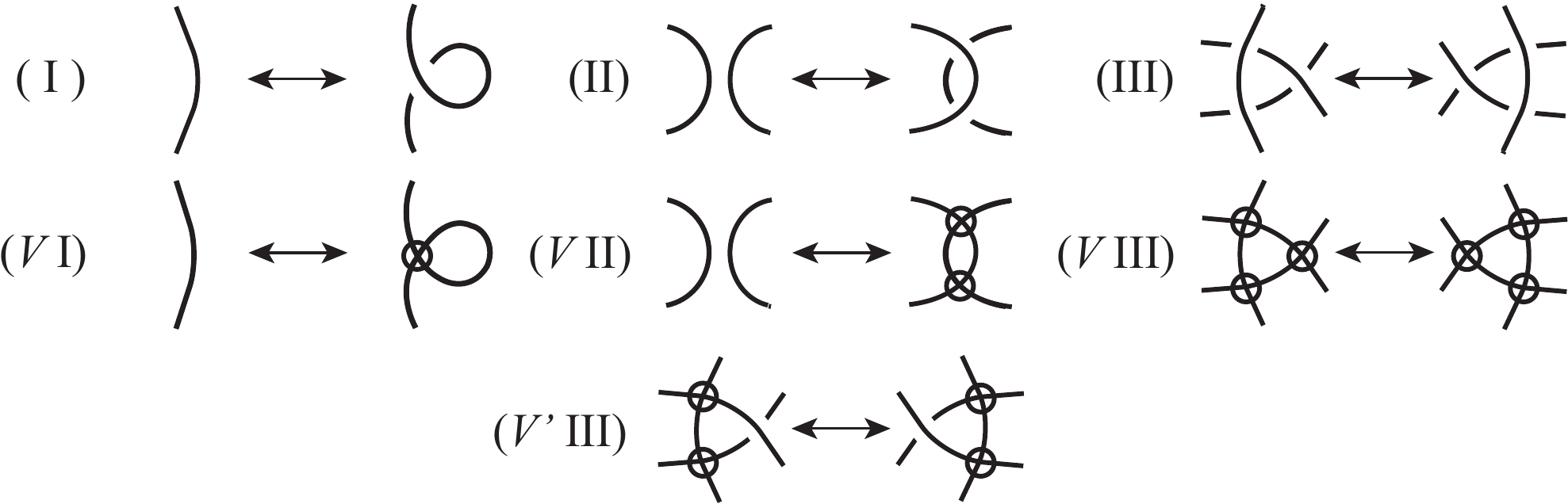}
\caption{Generalized Reidemeister moves.}
\label{fig:GRM}
\end{figure}

By definition, a virtual knot diagram is the image of a generic immersion from $\mathbb S^1$ into $\mathbb R^2$. Let $K$ be a (long) virtual knot and $D_K$ a (long) virtual knot diagram of $K$.  Then, $D_K$ is regarded as the image of a generic immersion $\mathbb{S}^1$ $\to$ $\mathbb{R}^2$.  A (based) {\em Gauss diagram} for $D_K$ is the preimage of $D_K$ with chords, each of which connects the preimages of each real crossing.  We specify over/under information of each real crossing on the corresponding chord by directing the chord toward the under path and assigning each chord with the sign of the crossing (Fig.~\ref{fig:sign}).

It is well-known that there exists a bijection from the set of (long) virtual knots to the set of equivalence classes of their (based) Gauss diagrams modulo the \emph{generalized Reidemeister moves of $($based$)$ Gauss diagrams} as shown in Fig.~\ref{fig:D_GRM}.  We identify a (long) virtual knot with an equivalence class of (based) Gauss diagrams, and we freely use either one of them depending on situations in this paper.

\begin{figure}[htbp]
\centering
\includegraphics[width=3cm,clip]{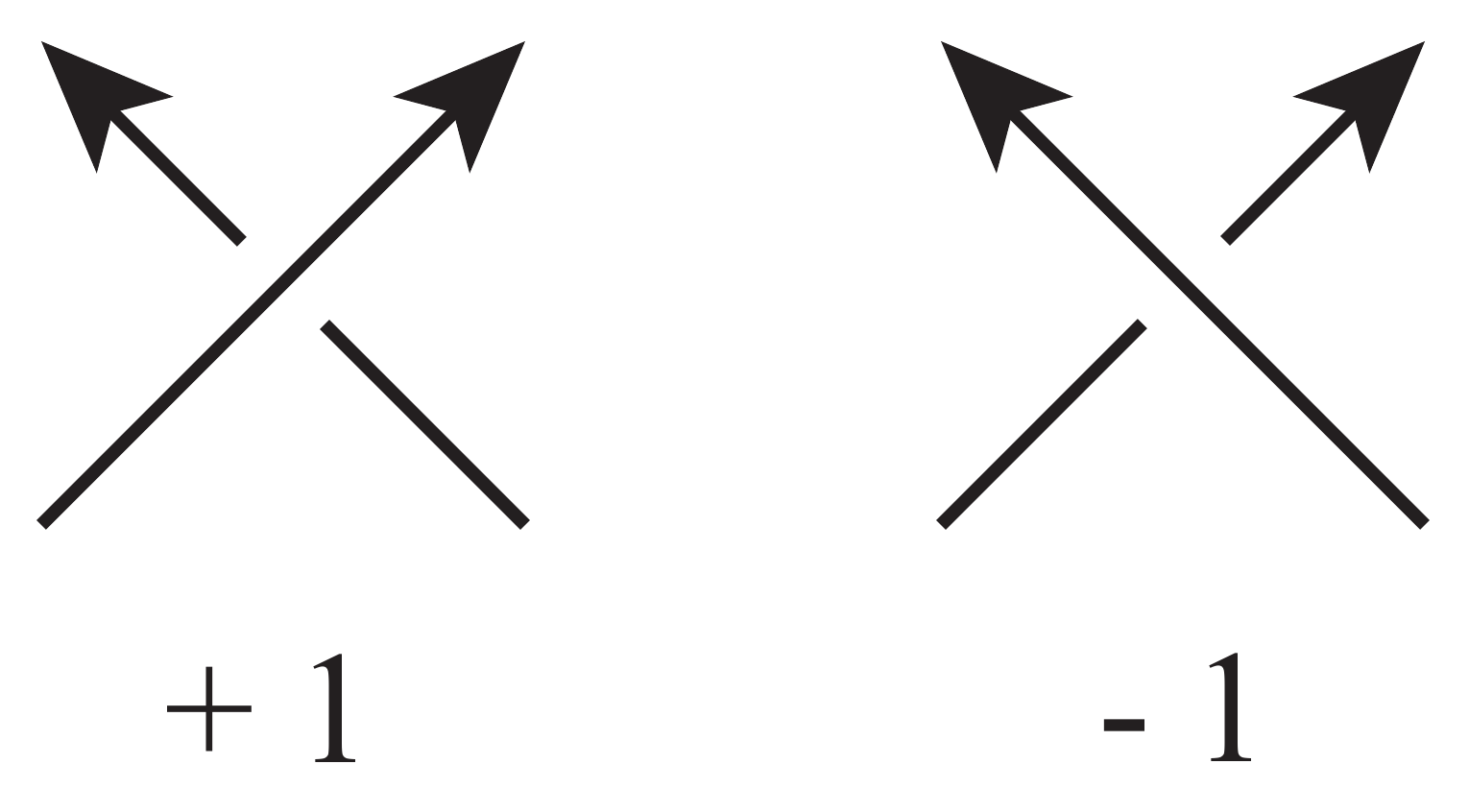}
\caption{The sign of a real crossing.}
\label{fig:sign}
\end{figure}

\begin{figure}[htbp]
\centering
\includegraphics[width=6cm,clip]{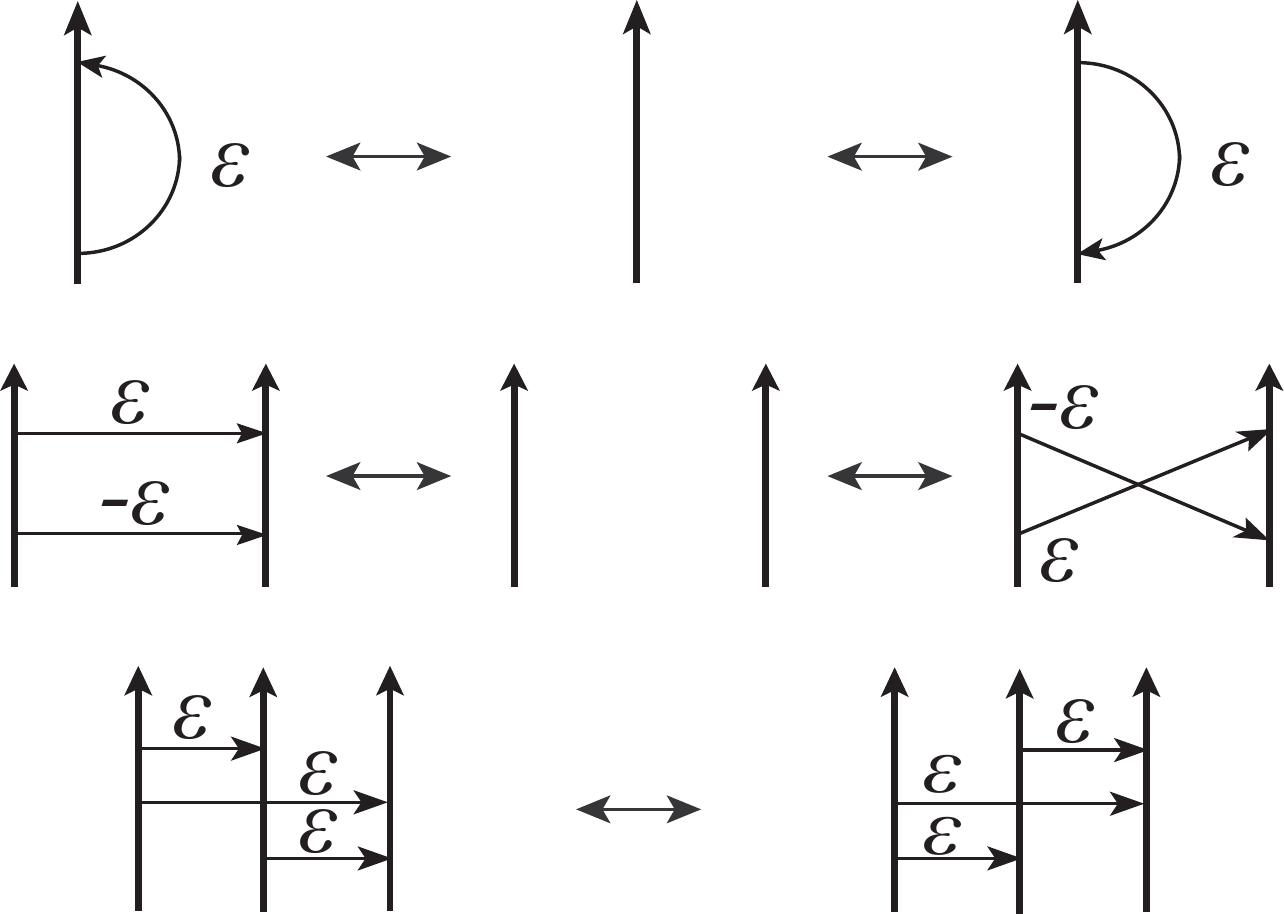}
\caption{Generalized Reidemeister moves of Gauss diagrams for virtual links.}
\label{fig:D_GRM}
\end{figure}

\begin{definition}[Gauss diagram formula]
Let $\mathcal{D}$ be the set of (based) Gauss diagrams and let $D \in \mathcal{D}$.  
A \emph{sub-Gauss diagram} of $D \in \mathcal D$ is a (based) Gauss diagram obtained by ignoring some chords of $D$.  Then, we write $D' \subset D$ to mean $D'$ is a sub-Gauss diagram of $D$.  
Define a map $J : {\mathcal D} \rightarrow \mathbb Z \mathcal D$ by
\[
J\left( D\right) = \sum_{D' \subset D}D',
\]
where the sum is over all sub-Gauss diagrams of $D$. Extend the map $J$ to $\mathbb{Z} \mathcal D$ linearly. 
On the generators of $\mathbb{Z} \mathcal D$, define $( D, E )$ to be $1$ if $D = E$ and $0$ otherwise. Then, extend $( \cdot , \cdot )$ bilinearly. Put
\begin{equation}\label{eq1}
	\langle  A,D\rangle =\left( A, J\left( D\right) \right),
\end{equation}
for any $D \in \mathcal D$ and $A \in \mathbb{Z} \mathcal D$.
The map  $\langle  A,D\rangle$ $(D \in \mathcal D, A \in {\mathbb Z}\mathcal D)$ is called a {\em Gauss diagram formula} for virtual knots.
\end{definition}
   
Let $K$ be a virtual knot and $D_K$ a diagram of $K$.  Suppose that a virtual knot diagram $D_K$ and the virtual knot diagram with no double point are equivalent under generalized Reidemeister moves.  Then, $K$ is called the \emph{unknot}.  Let $D_K$ be a virtual knot diagram.  Then, a \emph{local move} is a replacement of a sufficiently small disk $d (\subset {\mathbb{R}}^2)$ on $D (\subset {\mathbb{R}^2})$ by another disk $d' (\subset {\mathbb{R}^2})$ such that $\partial d$ $=$ $\partial d'$ and $(\mathbb{R}^2 \setminus d) \cup d'$ gives a virtual knot diagram.   
For a virtual knot diagram, the type of local move shown in Fig.~\ref{fig:virtualization} is called {\em virtualization}.  Every local move shown in Fig.~\ref{fig:forbidden} is called a {\em forbidden move}.  Note that a notion ``forbidden move'' consists of the two local moves, as shown in Fig.~\ref{fig:forbidden}.  
Let $M$ be a local move. Suppose that any virtual knot diagram is transformed into the virtual knot diagram with no double points by a finite sequence of local moves belonging to a notion $M$ and generalized Reidemeister moves.  Then, $M$ is called an {\it unknotting operation} for virtual knots. 

\begin{figure}[htbp]
\centering
\includegraphics[width=4cm,clip]{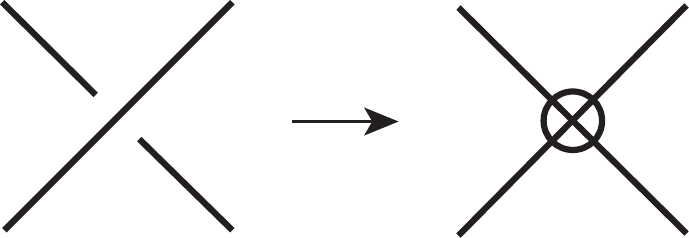}
\caption{Virtualization.}
\label{fig:virtualization}
\end{figure}

\begin{figure}[htbp]
\centering
\includegraphics[width=9.5cm]{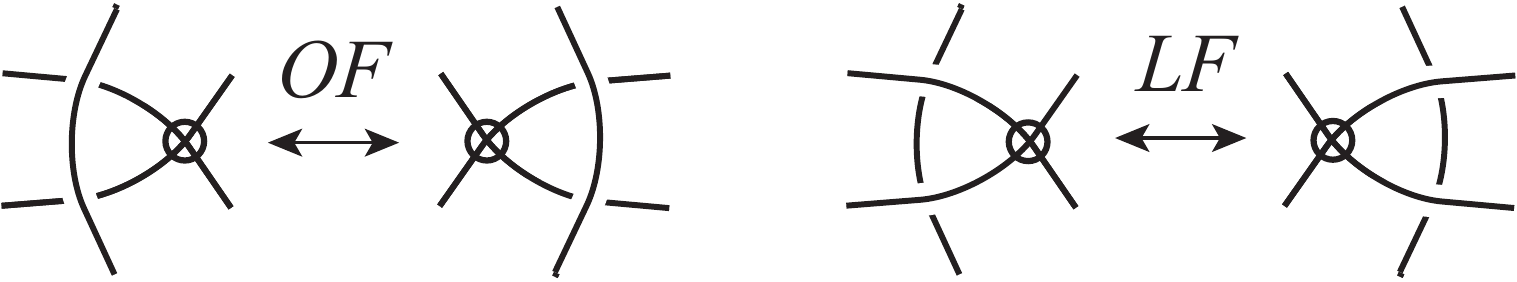}
\caption{Forbidden moves consisting of type $OF$ and type $LF$.}
\label{fig:forbidden}
\end{figure}

In the rest of the paperer,
if there is no confusion, a long virtual knot
(based Gauss diagram, resp.) is simply called a virtual knot (Gauss diagram, resp.).

\subsection{Invariants of ${\gpv}$-order and $F$-order}
\label{sub:FTI}

\begin{fact}[Goussarov, Polyak, and Viro \cite{GPV}]
Virtualization is an unknotting operation for virtual knots.
\end{fact}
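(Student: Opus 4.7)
The plan is to reduce the statement to a purely Gauss-diagrammatic claim: virtualization at a real crossing is equivalent, modulo generalized Reidemeister moves, to deleting the arrow for that crossing in the Gauss diagram. Granting this, an arbitrary virtual knot diagram $D$ with $n$ real crossings can be processed one crossing at a time: virtualize the first real crossing, then the second, and so on. After $n$ such applications the resulting diagram $D'$ has no real crossings, so its Gauss diagram has no chords. A chordless (based) Gauss diagram represents the unknot, hence $D$ is transformed into the trivial diagram by a finite sequence of virtualizations and generalized Reidemeister moves.

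To justify the Gauss-diagram claim, I would unfold the virtualization move of Fig.~\ref{fig:virtualization} and check that the two virtual crossings it introduces on one of the strands, together with the original real crossing, can be removed by a detour. Concretely, one of the two new virtual crossings together with the real crossing form a local configuration that admits a mixed Reidemeister III move (the move among one real and two virtual crossings, i.e.\ move (7) or its analogue in Fig.~\ref{fig:GRM}), which slides the real crossing off the other strand; the remaining pair of virtual crossings is then cancelled by a purely virtual Reidemeister II move. The upshot is that virtualization reroutes one strand past the crossing via virtual crossings, and since virtual crossings are invisible to the Gauss diagram, the chord for that crossing has effectively been erased.

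To complete the reduction of a crossing-free virtual diagram to the trivial diagram, I would invoke the standard detour move, which is a well-known consequence of the generalized Reidemeister moves of types involving only virtual crossings (and, at the boundary, the mixed move): any arc containing only virtual self-crossings and virtual crossings with other arcs may be replaced by any other generic arc with the same endpoints. Applied to $D'$, this lets us isotope the underlying immersed circle in $\mathbb{R}^2$ to a simple closed curve, i.e., the unknot diagram.

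The main obstacle is the first step, i.e., the careful verification that virtualization equals arrow-deletion on the Gauss diagram; everything else is an iteration of that single fact combined with the detour principle. Once the local move is interpreted correctly, the argument is essentially syntactic. One should also be mindful that the argument must be carried out in the \emph{based} (long virtual knot) setting used in Section~\ref{sec:pre}, but since virtualization, the mixed Reidemeister III move, and the detour move are all local and can be performed away from the base point, the same reasoning applies verbatim to pointed diagrams.
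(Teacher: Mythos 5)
This statement is quoted in the paper as a Fact from Goussarov--Polyak--Viro, and the paper itself supplies no proof, so there is nothing internal to compare against; measured against the standard argument, your proof is correct and is in fact the usual one: virtualization erases the corresponding arrow from the (based) Gauss diagram, so virtualizing every real crossing produces a diagram whose Gauss diagram is empty, and such a diagram is equivalent to the trivial one by detour moves (the purely virtual moves together with the mixed move). One remark: your second paragraph works harder than necessary and slightly misreads the local move. In this paper virtualization is exactly the replacement of a real crossing by a virtual crossing --- this is forced by the semi-virtual relation (real minus virtual) used to define $\gpv$-order --- so no auxiliary virtual crossings are introduced, and no mixed Reidemeister III or virtual Reidemeister II manipulation is needed: since virtual crossings are simply not recorded in a Gauss diagram, the arrow-deletion claim holds on the nose, not merely up to generalized Reidemeister moves. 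Your concluding steps (a chordless based Gauss diagram represents the long unknot, via the bijection with Gauss diagrams or directly by the detour move, and all moves can be performed away from the base point) are fine, so the argument goes through.
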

\begin{fact}[Kanenobu \cite{kanenobu1}, Nelson \cite{nelson}]\label{fact2}
The pair $OF, LF$ of forbidden moves is an unknotting operation for virtual knots.  
\end{fact}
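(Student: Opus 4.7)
The plan is to pass to the Gauss diagram model of Section~\ref{sub:DE}. First, I would translate the two forbidden moves into operations on Gauss diagrams. A direct case inspection of Fig.~\ref{fig:forbidden}, using the sign and orientation convention of Fig.~\ref{fig:sign} (arrows point from over-path to under-path), shows that the move $OF$ corresponds precisely to transposing two cyclically adjacent arrow tails on the circle of a Gauss diagram, while $LF$ corresponds to transposing two cyclically adjacent arrow heads. Neither move adds or removes a chord, nor changes any sign; each only reorders like-typed endpoints along the circle.

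Given this translation, it suffices to prove that, using adjacent tail-transpositions, adjacent head-transpositions, and the generalized Reidemeister moves of Gauss diagrams in Fig.~\ref{fig:D_GRM}, every Gauss diagram can be reduced to the empty diagram, which represents the unknot. I would argue by induction on the number $n$ of chords. The base case $n=0$ is immediate, and for the inductive step the aim is to strictly decrease $n$. The strategy is to choose a chord $\alpha$ and manipulate the diagram so that either $\alpha$ becomes isolated on the circle (its head and tail adjacent, with no other endpoint between them) and is then removed by an R1 of Gauss diagrams, or $\alpha$ becomes parallel to another chord of opposite sign in the configuration required by R2 and the pair is cancelled.

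The main obstacle is the structural asymmetry between heads and tails: the forbidden moves never swap a head past a tail, so the cyclic pattern of head/tail labels along the circle is preserved, and one cannot always simply slide the two endpoints of $\alpha$ together. The crux of the proof is therefore a combinatorial reduction showing that any chord $\beta$ obstructing the isolation of $\alpha$ can itself be eliminated first. Concretely, one uses $OF$ to gather the tail of $\beta$ into the tail block neighbouring its head, uses $LF$ symmetrically for its head, and then applies R1 or, after introducing a cancelling pair via R2, removes $\beta$ entirely. Iterating with a suitable complexity measure---for instance, the lexicographic pair consisting of the number of chords and the number of same-type inversions along the circle---ensures that this clearing procedure terminates. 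Once $\alpha$ is isolable it is removed by R1, completing the inductive step. When no chords remain, the diagram represents the unknot, which proves that the pair $OF, LF$ is an unknotting operation for virtual knots.
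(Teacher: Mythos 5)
The paper itself offers no proof of this statement---it is quoted as a Fact from Kanenobu and Nelson---so your argument has to stand on its own. Your first step is correct and is the standard starting point (it is how Nelson sets the problem up): with the convention of Fig.~\ref{fig:sign}, where arrows point toward the under-path, one of the two forbidden moves transposes two cyclically adjacent arrow tails and the other transposes two cyclically adjacent arrow heads, with no change of signs, of the chords involved, or of any other endpoint.

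The gap is in the reduction step, which is the entire content of the theorem. You correctly identify the obstacle---forbidden moves never move a head past a tail, so they preserve the cyclic head/tail pattern---but your proposed resolution then assumes it away. The instruction ``use $OF$ to gather the tail of $\beta$ into the tail block neighbouring its head'' is in general impossible: that tail can only be transposed with other tails, so any head lying between it and the head of $\beta$ blocks the gathering, and this is exactly the configuration your induction must handle (forbidden moves alone only permute tails within a maximal tail block and heads within a maximal head block). Likewise, cancelling $\beta$ against a pair introduced by R2 requires first bringing an inserted chord into the adjacent parallel position relative to $\beta$, which again needs head-past-tail exchanges; without them, an R2 insertion followed by an R2 cancellation merely replaces $\beta$ by a parallel copy of itself. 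Note that deleting a chord changes the underlying virtual knot, so no argument using only the generalized Reidemeister moves of Gauss diagrams can do it---the forbidden moves must enter in an essential, explicitly exhibited way, and your sketch never carries out a single such elimination. Finally, the proposed termination measure (number of chords, number of same-type inversions) is never shown to decrease: R1/R2 insertions increase the chord count and alter the block structure. What is missing is precisely the combinatorial heart of Kanenobu's and Nelson's theorem, e.g.\ an explicit derivation, from $OF$, $LF$ together with R2 and R3 in a concrete local configuration, of a head--tail exchange or of the deletion of a chosen chord, together with a measure that strictly decreases. As it stands, the proposal is a plausible plan rather than a proof.
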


\begin{definition}[triangle]\label{dfn_triangle}
Let $D_K$ be a virtual knot diagram ($\subset \mathbb{R}^2$).  For $D_K$, if there exists a disk ($\subset \mathbb{R}^2$) which look like one of Fig.~\ref{fig:triangle disks}, the disk is called a {\it triangle}. For every triangle, the sign of the triangle is defined as shown in Fig.~\ref{fig:signs of triangle disks}.  Note that triangles consist of the four types.  
When we would like to specify the sign of a triangle, the triangle is called an $\epsilon$-triangle ($\epsilon = +, -$).  We call a triangle with the opposite sign to $\epsilon$ a $(-\epsilon)$-triangle.  

\begin{figure}[htbp]
\centering
\includegraphics[width=11cm]{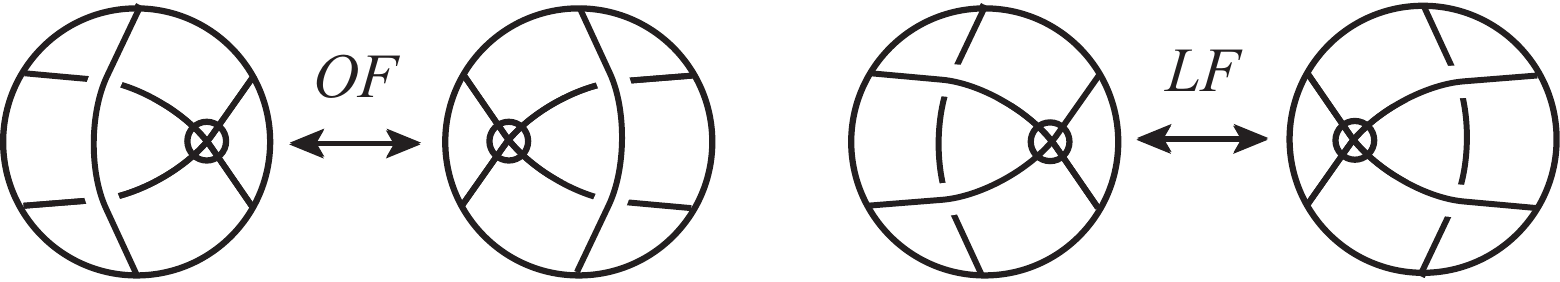}
\caption{Triangles (four types).}
\label{fig:triangle disks}
\end{figure}

\begin{figure}[htbp]
\centering
\includegraphics[width=12cm,clip]{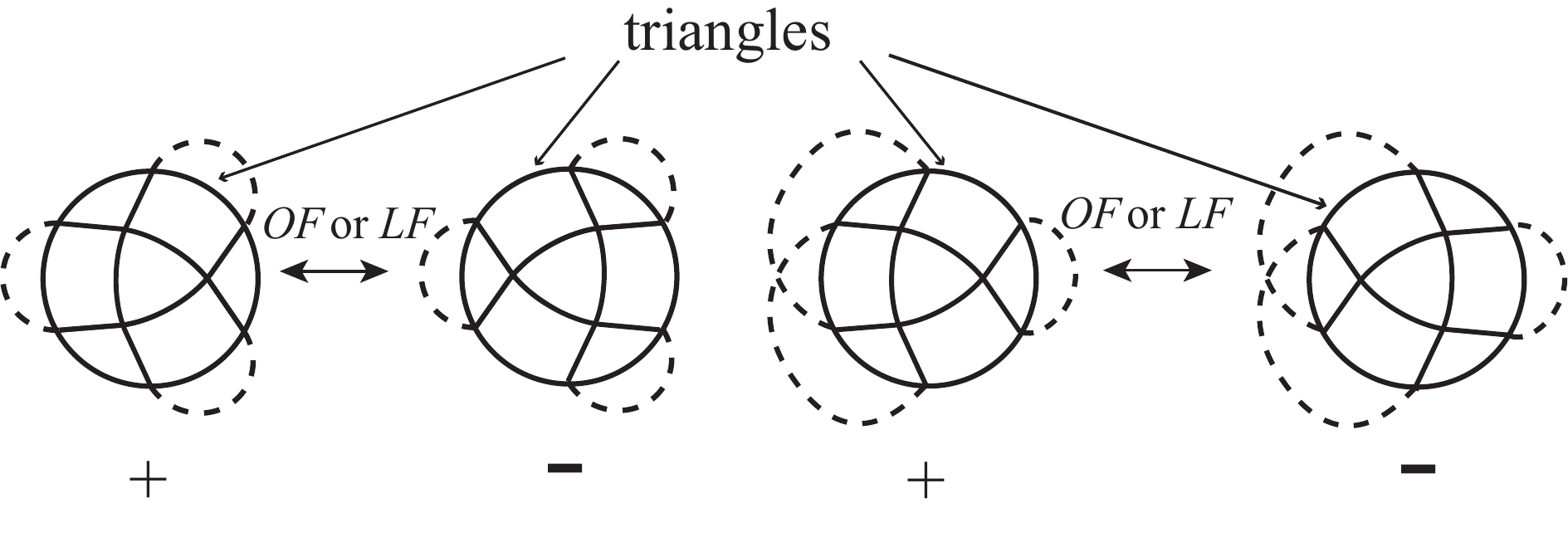}
\caption{Signs of triangles.  Dotted curves indicate the connections of virtual knot diagrams. }
\label{fig:signs of triangle disks}
\end{figure}
\end{definition}

\begin{definition}[Finite type invariants of $\gpv$-order]
Let $\mathcal{VK}$ be the set of the virtual knots. Let $G$ be an abelian group.  Let $v$ be a function $\mathcal{VK}$ $\to$ $G$.  We suppose that $v$ is an invariant of virtual knots.    
The function $v: {\mathcal{VK}} \rightarrow G$ is called a \emph{finite type invariant of $\gpv$-order} $\leq n$ if for any virtual knot diagram $D$ and for any $n+1$ real crossings $d_1, d_2, \ldots, d_{n+1}$,
\begin{align}
\label{align:gpveq}
\sum_{\delta} {(-1)^{|\delta |} v(D_{\delta })=0},
\end{align}
where $\delta=(\delta_1, \delta_2, \ldots , \delta_{n+1})$ runs over $(n+1)$-tuples of 0 or 1, $|\delta |$ is the number of $1$'s in $\delta$, and
$D_\delta$ is a diagram obtained from $D$ by applying a virtualization to $d_i$ with $\delta_i=1$.

If $v$ is a finite type invariant of $\gpv$-order $\leq n$ and is not a finite type invariant of $\gpv$-order $\leq {n-1}$, $v$ is called a \emph{finite type invariant of $\gpv$-order} $n$, and $v$ is denoted by $v^{\gpv}_n$.  
\end{definition}  

Let $\mathcal{VD}$ be the set of virtual knot diagrams and 
$\mathbb Z[{\mathcal{VD}}]$ the free $\mathbb Z$-module generated by the elements
of $\mathcal{VD}$. By the following formal relation (\ref{align:semiv}), we introduce a semi-virtual crossing, 
and a diagram having some semi-virtual crossings.

\begin{align}
\label{align:semiv}
\parbox{30pt}{\includegraphics[width=30pt]{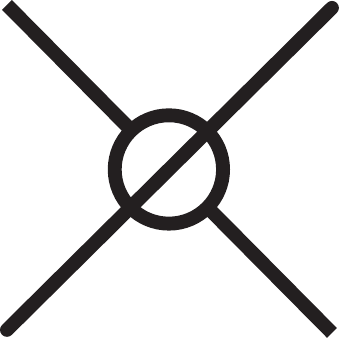}}~
:=~\parbox{30pt}{\includegraphics[width=30pt]{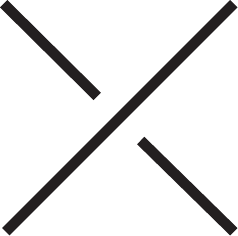}}~
-~\parbox{30pt}{\includegraphics[width=30pt]{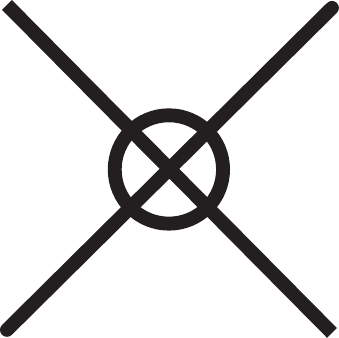}}
\end{align}

Any virtual knot invariant extends to formal linear combinations of
virtual knot diagrams by linearity. Under the identification (\ref{align:semiv}),
the alternating sum in the left hand side of the equality (\ref{align:gpveq})
becomes the value of $v$ on a virtual knot with $n+1$ semi-virtual crossings.
Thus, a virtual knot invariant has $\gpv$-order at most $n$ if its extension vanishes
on every a diagram having at least $n+1$ simi-virtual crossings.

\begin{definition}[Finite type invariants of $F$-order]\label{f-order}
Let $\mathcal{VK}$ be the set of the virtual knots.  Let $G$ be an abelian group  
and $v$ a function $\mathcal{VK}$ $\to$ $G$.  We suppose that $v$ is an invariant of virtual knots.  
The function $v :$ ${\mathcal{VK}} \rightarrow G$ is called a \emph{finite type invariant of $F$-order} $\leq n$ if for any virtual knot diagram $D$ and for any $n+1$ disjoint triangles $d_1, d_2, \ldots, d_{n+1}$,
\begin{align}
\label{align:Fneq}
\sum_{\delta} {(-1)^{|\delta |} v(D_{\delta })=0},
\end{align}
where $\delta=(\delta_1, \delta_2, \ldots , \delta_{n+1})$ runs over $(n+1)$-tuples of 0 or 1, $|\delta |$ is the number of $1$'s in $\delta$, and
$D_\delta$ is a diagram obtained from $D$ by applying a forbidden move to $d_i$ with $\delta_i=1$.

If $v$ is a finite type invariant of $F$-order $\leq n$ and is not a finite type invariant of $F$-order $\leq {n-1}$, $v$ is called a \emph{finite type invariant of $F$-order} $n$, and $v$ is denoted by $v^F_n$.    
\end{definition}  

Let $\mathcal{VD}$ be the set of virtual knot diagrams and 
$\mathbb Z[{\mathcal{VD}}]$ the free $\mathbb Z$-module generated by the elements
of $\mathcal{VD}$. By the following formal relation (\ref{align:semiOF}) and  (\ref{align:semiLF}), we introduce a semi-triple point, 
and a diagram having some semi-triple points.

\begin{align}
\parbox{40pt}{\includegraphics[width=40pt]{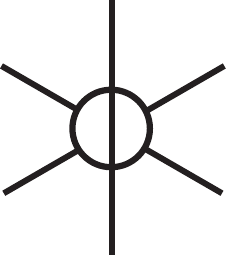}}~
&:=~\parbox{40pt}{\includegraphics[width=40pt]{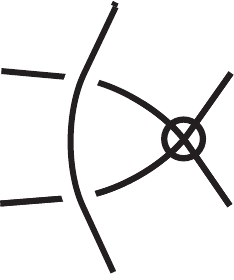}}~
-~\parbox{40pt}{\includegraphics[width=40pt]{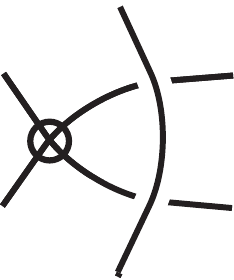}} ,\mbox{ and}\label{align:semiOF}\\
& \qquad {\textrm{positive}} \qquad {\textrm{negative}} \nonumber\\
\parbox{40pt}{\includegraphics[width=40pt]{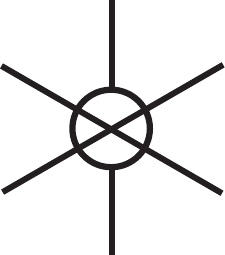}}~
&:=~\parbox{40pt}{\includegraphics[width=40pt]{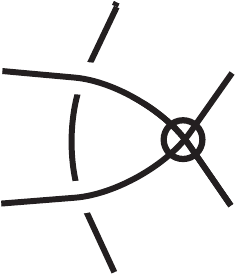}}~
-~\parbox{40pt}{\includegraphics[width=40pt]{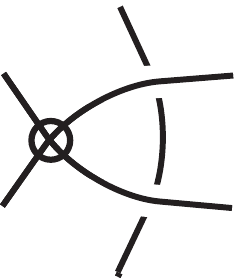}}. \label{align:semiLF}\\
& \qquad {\textrm{positive}} \qquad {\textrm{negative}} \nonumber
\end{align}

Any virtual knot invariant extends to formal linear combinations of
virtual knot diagrams by linearity. Under the identification (\ref{align:semiOF}) and  (\ref{align:semiLF}),
the alternating sum in the left hand side of the equality (\ref{align:Fneq}),
becomes the value of $v$ on a virtual knot with $n+1$ semi-triple points.
Thus, a virtual knot invariant has $F$-order at most $n$ if its extension vanishes
on every a diagram having at least $n+1$ simi-triple points.

\subsection{$\gpv _n$-similarity and $F_n$-similarity}
\label{sub:ntri}

\begin{definition}[$\gpv _n$-similar, $\gpv _n$-trivial]
Let $K$ and $L$ be virtual knots and
$\widetilde{K}$ ($\widetilde{L}$, resp.) a virtual knot diagram of $K$ ($L$, resp.).  
Suppose that $A_i$ ($1 \le i \le n$) is a non-empty set of some real crossings of $\widetilde{K}$ or $\widetilde{L}$.  
We say that $K$ and $L$ are \emph{$\gpv _n$-similar} 
if there exist $\{ A_i$ ($1 \leq i \leq n$)$\}$ satisfying two conditions:  
\begin{itemize}
\item $A_i \cap A_j = \emptyset$ $(i \neq j)$ for every pair $i$ and $j$, and 
\item by replacing real to virtual at every crossing of any nonempty subfamily of $\{A_i~|~1\leq i \leq n\}$, $\widetilde{L}$ is obtained from $\widetilde{K}$ or $\widetilde{K}$ is obtained from $\widetilde{L}$.  
\end{itemize}
In particular, if a virtual knot $K$ and the unknot are $\gpv _n$-similar, $K$ is \emph{$\gpv _n$-trivial}.   
\end{definition}
\begin{definition}[$F_n$-similar, $F_n$-trivial]
Let $K$ and $L$ be virtual knots, and $\widetilde{K}$ ($\widetilde{L}$, resp.) a virtual knot diagram of $K$ ($L$, resp.).  
Let $A_i$ ($1 \le i \le n$) be a non-empty set of disjoint triangles in $\widetilde{K}$.  
We say that $K$ and $L$ are \emph{$F_n$-similar} if there exist $\{ A_i$ ($1 \leq i \leq n$)$\}$ such that
\begin{itemize}
\item $A_i \cap A_j = \emptyset$ $(i \neq j)$, and 
\item $\widetilde{L}$ is obtained from $\widetilde{K}$ by forbidden moves at the triangles in any nonempty subfamily of $\{A_i~|~1\leq i \leq n\}$.  
\end{itemize}
In particular, if a virtual knot $K$ and the unknot are $F_n$-similar, $K$ is \emph{$F_n$-trivial}.
\end{definition}

In \cite{IS}, we have the following results.

\begin{fact}[Proof of Theorem 1, \cite{IS}]
Let $K$ be a given virtual knot. For any natural number $n$ and $\ell$, there exists $K_n^{\ell}$ such that $K_n^{\ell}$ is a $\gpv_n$-similar to $K$.
\label{fact:ex_GPV}
\end{fact}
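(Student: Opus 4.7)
The plan is to exhibit $K_n^{\ell}$ by attaching to a diagram of $K$ a ``$\gpv_n$-gadget'': a local virtual tangle $G_n^{\ell}$ carrying $n$ pairwise disjoint nonempty sets $A_1,\dots,A_n$ of real crossings such that, for every nonempty $S\subseteq\{1,\dots,n\}$, virtualizing $\bigcup_{i\in S}A_i$ reduces $G_n^{\ell}$ (via generalized Reidemeister moves) to the trivial two-arc tangle. If this is achieved, then inserting $G_n^{\ell}$ into a small disk of $\widetilde{K}$ (equivalently, connected-summing $K$ with a long virtual knot carrying the gadget) gives the desired $K_n^{\ell}$, and the $A_i$ immediately witness $\gpv_n$-similarity with $K$.

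The first step is to build $G_n^{\ell}$, which I would do inductively on $n$ at the level of Gauss diagrams. For $n=1$, take a pair of linked chords of opposite sign: not reducible as is (their endpoints obstruct any Reidemeister~I removal) but becoming R1-removable once the chords in $A_1$ are erased. For the inductive step, amalgamate $n$ such interlocking blocks in a Gauss diagram so that the blocks share no chords, yet each set $A_i$, when erased, unlocks a cascade of Reidemeister~I/II moves on the remaining chords. The index $\ell$ is taken to parametrize a cosmetic decoration attached outside the critical part of the gadget (for instance, an $\ell$-fold iterated Kishino-type insertion, or $\ell$ extra virtual crossings forming an untangled decoration), so that the sets $A_i$ are preserved unchanged while the underlying virtual knot type varies with $\ell$.

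Once the gadget is constructed, verifying $\gpv_n$-similarity is essentially formal. Let $\widetilde{K_n^{\ell}}$ be the diagram obtained by splicing $G_n^{\ell}$ into $\widetilde{K}$. The $A_i$ are disjoint by construction. For any nonempty subfamily $S$, virtualization of $\bigcup_{i\in S}A_i$ triggers the cascade inside $G_n^{\ell}$, collapsing the gadget to the trivial tangle; the surrounding diagram is identical to $\widetilde{K}$, so the resulting virtual knot is $K$. This is exactly the condition of $\gpv_n$-similarity. Finally, to see that distinct values of $\ell$ produce distinct virtual knots $K_n^{\ell}$, I would appeal to an elementary virtual knot invariant (for example, the Kauffman odd writhe or a low-degree Gauss diagram formula) which is sensitive to the chosen decoration.

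The main obstacle is the design step for $G_n^{\ell}$: it is not enough that virtualizing the \emph{full} collection $A_1\cup\cdots\cup A_n$ trivializes the gadget --- one needs \emph{every} nonempty subfamily to trigger the cascade. This is a combinatorial bookkeeping problem on Gauss diagrams, namely choosing the interlocking pattern so that erasure of any single $A_i$ still exposes a terminating sequence of applicable R1/R2/R3 moves. This case analysis is exactly what is carried out in the proof of Theorem~1 of \cite{IS}, and it is the only nontrivial content; once that local combinatorics is in hand, Steps 2 and 3 above are immediate.
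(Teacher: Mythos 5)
Your skeleton (splice a $\gpv_n$-trivial local gadget into a diagram of $K$ and let the sets $A_1,\dots,A_n$ witness the similarity) is the same as the construction this paper imports from the proof of Theorem~1 of \cite{IS}, and the same architecture appears in the $F_n$-analogue proved here as Theorem~\ref{thm:Fn-simi}: the gadget is the closure $\hat{b}(n)$ of an iterated commutator, spliced in by a diagrammatic connected sum. The trouble is that the only content you supply beyond ``defer to \cite{IS}'' --- the $\ell$-dependence and the distinctness of the $K_n^\ell$ --- is exactly where the argument breaks. For $K_n^\ell$ to be $\gpv_n$-similar to $K$, virtualizing the crossings of \emph{every} nonempty subfamily of $\{A_i\}$ must return a diagram of $K$ itself; hence whatever encodes $\ell$ must vanish together with the gadget after the collapse, while it must change the virtual knot type of $K_n^\ell$ when nothing is virtualized. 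Your ``cosmetic decoration attached outside the critical part'' cannot satisfy both requirements: if it survives the cascade, the collapsed diagram represents $K$ connect-summed with the decoration rather than $K$, so the $A_i$ do not witness $\gpv_n$-similarity to $K$; if instead it is an ``untangled'' decoration removable by generalized Reidemeister moves (e.g.\ extra virtual crossings), it changes nothing and all the $K_n^\ell$ are the same knot. In the construction of \cite{IS} and of Theorem~\ref{thm:Fn-simi}, the $\ell$ full twists are inserted into $\hat{b}(n)$ precisely so that they are locked by the commutator and unwind only when it collapses; this compatibility is the nontrivial design point, not an afterthought.

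The distinctness step is also unsound as proposed. Every member of your family is $\gpv_n$-similar to $K$, so by Lemma~\ref{lem:gpv-simi} all $\gpv$-invariants of order $<n$ take the same value on all of the $K_n^\ell$; and by Proposition~\ref{prop:gpvVa} a Gauss diagram formula of fixed low degree is a $\gpv$-invariant of order bounded by its number of arrows. Hence for $n$ beyond that bound a ``low-degree Gauss diagram formula'' is structurally incapable of separating $K_n^\ell$ from $K_n^{\ell'}$. The odd writhe fares no better: it vanishes on classical knots (the case needed for the application in Fact~\ref{fact:GPV}), and for the decorations you actually name there is no reason it varies with $\ell$ at all. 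This is why the written proof of Theorem~\ref{thm:Fn-simi} (and of Theorem~1 of \cite{IS}) spends its effort on a non-finite-type quantity, the maximal degree of the Jones polynomial computed from a Kauffman state, which grows like $2\ell$ and therefore certifies that the knots are pairwise distinct. Since the combinatorial core of the gadget is simply referred back to \cite{IS}, these two defective steps constitute the entire original content of the proposal, so the proof as it stands has a genuine gap.
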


\begin{fact}[\cite{IS}]
For any natural number $n$, there exists $K_n$ such that $K_n$ is a $F_n$-trivial virtual knot.
\label{fact:ex_Fn}
\end{fact}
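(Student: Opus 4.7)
The plan is to give an explicit construction of the virtual knots $K_n$. Because the definition of $F_n$-triviality demands that the diagram becomes the unknot after applying forbidden moves at \emph{every} nonempty subfamily of the triangles $\{A_i\}$, and not only at the full family, the construction must arrange that each individual triangle $A_i$ already carries enough non-triviality to trigger an unknotting of the whole diagram on its own.

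First I would produce a base example $K_1$: a non-trivial virtual knot diagram containing one triangle $A_1$ whose forbidden move, followed by a short sequence of generalized Reidemeister moves, yields the trivial diagram. Such an example exists by Fact~\ref{fact2}, which asserts that forbidden moves form an unknotting operation; one specializes a minimal unknotting sequence to obtain it. For general $n$, I would build $K_n$ by inserting $n$ pairwise disjoint local copies of the $K_1$-gadget into a carrier diagram, each providing its own triangle $A_i$, arranged so that applying a forbidden move at any single $A_i$ triggers a global cascade of generalized Reidemeister moves reducing all of $K_n$ to the unknot.

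Verification then amounts to checking, for each of the $2^n - 1$ nonempty subsets $S \subseteq \{1, \dots, n\}$, that the diagram obtained from $K_n$ by simultaneously applying the forbidden moves at the triangles $\{A_i : i \in S\}$ reduces, via the generalized Reidemeister moves of Fig.~\ref{fig:D_GRM}, to the trivial Gauss diagram. This is the main obstacle: the definition demands that \emph{all} such simultaneous applications yield the unknot, so one cannot naively argue ``apply one forbidden move, reach the unknot, and ignore the rest,'' because a further forbidden move applied to an unknotted diagram may in general introduce new non-triviality. One way to manage this is to exhibit, for each $S$, an explicit sequence of generalized Reidemeister moves on the modified Gauss diagram that witnesses reduction to the trivial Gauss diagram, and to verify by direct combinatorial analysis that these sequences exist uniformly across all $S$; this case analysis, while laborious, is essentially mechanical once the gadget structure is fixed and its locality properties are established.
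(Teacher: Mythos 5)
There is a genuine gap: you never actually produce the construction, and the architecture you sketch is the wrong one. Your plan is to insert $n$ independent ``gadgets,'' each of which, by a forbidden move at its own triangle, ``triggers a global cascade'' unknotting the whole diagram. But if one local move at $A_1$ already reduces everything (including the untouched gadgets $A_2,\dots,A_n$) to the unknot, the remaining gadgets cannot themselves be carrying independent knotting, and you then have no control over what happens when forbidden moves are applied simultaneously at a subset $S$ with $|S|\ge 2$ --- which you yourself identify as the main obstacle. Your proposed resolution, an ``essentially mechanical'' case analysis over all $2^n-1$ subsets of an unspecified gadget, is exactly the part that needs a structural idea; without exhibiting the gadget and the mechanism forcing every subfamily to unknot, nothing has been proved. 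In addition, the content of this fact (as it is used to deduce Fact~\ref{fact:Fn}) is that $K_n$ is a \emph{nontrivial} $F_n$-trivial knot, and your proposal gives no way to certify nontriviality of $K_n$ for general $n$.

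The construction this paper relies on (recalled from \cite{IS} in Section~\ref{sub:Fn-simi}) solves both problems at once by an iterated commutator, not by independent gadgets: one sets $b(1)=A$, $b(2)=[B,b(1)]$, $b(4u-1)=[B,b(4u-2)]$, etc., and takes $K_n=\hat b(n)$, the closure. The designated families $A_i$ of triangles are chosen so that performing forbidden moves on any one family trivializes one entry of a commutator, and a commutator with a trivial entry is itself trivial; this collapse propagates through the nesting, so \emph{every} nonempty subfamily (not just singletons) yields the unknot, with no case-by-case verification over subsets. Nontriviality of $\hat b(n)$ (and of its variants $K_n^{\ell}$) is then detected by the maximal degree of the Jones polynomial, as in Step~1 of the proof of Theorem~\ref{thm:Fn-simi}. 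This commutator mechanism and the invariant detecting nontriviality are the two ingredients missing from your proposal.
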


As corollaries, we have Fact \ref{fact:GPV} and \ref{fact:Fn}.

\begin{fact}\label{fact:GPV} 
For any classical knot $K$, any positive integers $\ell$, $m$, and $n$ $(m \le n-1)$, and any finite type invariant $v^{\gpv}_m$ of $\gpv$-order $m$, there exist infinitely many of classical knots $K^{\ell}_n$ such that $v^{\gpv}_m(K^{\ell}_n)=v^{\gpv}_m(K)$.
\end{fact}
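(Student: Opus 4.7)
The plan is to deduce Fact \ref{fact:GPV} from Fact \ref{fact:ex_GPV} by combining it with the standard principle that $\gpv_n$-similar virtual knots take equal values under any finite type invariant of $\gpv$-order at most $n-1$.

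First I would establish that principle as a lemma. Let $K$ and $L$ be $\gpv_n$-similar virtual knots, witnessed by disjoint nonempty sets of real crossings $A_1,\ldots,A_n$ in a diagram $\widetilde{K}$. For $T \subseteq \{1,\ldots,n\}$ write $\widetilde{K}_T$ for the diagram obtained from $\widetilde{K}$ by virtualizing every crossing in $\bigcup_{i \in T} A_i$, so that $\widetilde{K}_\emptyset$ represents $K$ and $\widetilde{K}_T$ represents $L$ for every nonempty $T$. I would analyze
\[
\Sigma := \sum_{T \subseteq \{1,\ldots,n\}} (-1)^{|T|}\, v^{\gpv}_m(\widetilde{K}_T)
\]
in two ways. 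Splitting off $T = \emptyset$ and using $\sum_{T \ne \emptyset}(-1)^{|T|} = -1$ gives $\Sigma = v^{\gpv}_m(K) - v^{\gpv}_m(L)$. On the other hand, for each site $A_i$ the semi-virtual telescoping coming from (\ref{align:semiv}) rewrites ``all real on $A_i$'' minus ``all virtual on $A_i$'' as a sum over $c \in A_i$ of diagrams carrying exactly one semi-virtual crossing on $A_i$. Multiplying these expansions across $i = 1,\ldots,n$ realizes $\Sigma$, by linearity, as a signed sum of evaluations of $v^{\gpv}_m$ on diagrams each with $n$ semi-virtual crossings. Because $v^{\gpv}_m$ has $\gpv$-order $m \le n-1$, its extension vanishes on any diagram with $\ge n$ semi-virtual crossings (as noted immediately after the definition of $\gpv$-order), hence $\Sigma = 0$ and so $v^{\gpv}_m(K) = v^{\gpv}_m(L)$.

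Next I would apply Fact \ref{fact:ex_GPV} to the given classical knot $K$, regarded as a virtual knot, to obtain for each positive integer $\ell$ a knot $K_n^\ell$ that is $\gpv_n$-similar to $K$. The lemma above immediately yields $v^{\gpv}_m(K_n^\ell) = v^{\gpv}_m(K)$ for every $m \le n-1$, which is the equality claimed in Fact \ref{fact:GPV}.

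The main remaining obstacle is to ensure that the $K_n^\ell$ can be arranged to be honest classical knots and to form an infinite family of pairwise non-isotopic examples. I would handle this by inspecting the explicit construction in the proof of Theorem 1 of \cite{IS}: the local modifications used there preserve classicality when applied to a classical diagram, and the parameter $\ell$ governs a twist or iteration count that varies some invariant of $\gpv$-order $\ge n$ (for instance an appropriate Vassiliev invariant of order $n$, or a classical polynomial invariant), so that distinct values of $\ell$ produce pairwise non-isotopic classical knots. The $\gpv$-order accounting in the lemma step is routine; this verification that the construction from \cite{IS} can be read off as producing infinitely many classical examples is the delicate part.
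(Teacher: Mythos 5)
Your proposal follows essentially the same route as the paper: the paper obtains Fact~\ref{fact:GPV} by combining Fact~\ref{fact:ex_GPV} with Lemma~\ref{lem:gpv-simi} (Lemma~1 of \cite{IS}), which is exactly the telescoping lemma you re-prove, and the classicality and pairwise-distinctness of the knots $K_n^\ell$ is likewise delegated to the construction in the proof of Theorem~1 of \cite{IS}. Your argument is correct; the only difference is that you supply a proof of the lemma where the paper simply cites it.
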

\begin{fact}\label{fact:Fn}
Let $O$ be the unknot.  For any positive integers $m$ and $n$ $(m \le n-1)$, and for any finite type invariant $v^F_m$ of $F$-order $m$, there exists a nontrivial virtual knot $K_n$ such that $v^F_m (K_n)=v^F_m (O)$.
\end{fact}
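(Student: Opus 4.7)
The plan is to derive Fact \ref{fact:Fn} as a direct consequence of Fact \ref{fact:ex_Fn} together with the defining alternating-sum property of $F$-order invariants. First, I would invoke Fact \ref{fact:ex_Fn} to obtain, for each natural number $n$, a nontrivial virtual knot $K_n$ that is $F_n$-trivial. Unpacking the definition, this yields a diagram $\widetilde{K}_n$ of $K_n$ together with $n$ pairwise disjoint triangles $A_1, \ldots, A_n$ in $\widetilde{K}_n$ such that, for every nonempty subset $S \subseteq \{1, \ldots, n\}$, the diagram $D_S$ obtained from $\widetilde{K}_n$ by performing the forbidden moves at the triangles $\{A_i\}_{i \in S}$ represents the unknot $O$.

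Next, I would fix any finite type invariant $v^F_m$ of $F$-order $m$ with $m \le n-1$ and apply the semi-triple-point reformulation given by (\ref{align:semiOF}) and (\ref{align:semiLF}). By that reformulation, the linear extension of $v^F_m$ to $\mathbb{Z}[\mathcal{VD}]$ vanishes on every diagram carrying at least $m+1$ semi-triple points, so, for the configuration of $n \ge m+1$ disjoint triangles $A_1, \ldots, A_n$, one obtains
\[
\sum_{\delta \in \{0,1\}^n} (-1)^{|\delta|}\, v^F_m(D_\delta) = 0,
\]
where $\delta = (\delta_1,\ldots,\delta_n)$ and $D_\delta$ is the diagram obtained from $\widetilde{K}_n$ by applying a forbidden move at $A_i$ exactly when $\delta_i = 1$.

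Finally, I would isolate the $\delta = (0,\ldots,0)$ term, which contributes $v^F_m(K_n)$, from the remaining $2^n - 1$ terms, each of which equals $v^F_m(O)$ by $F_n$-triviality of $K_n$. Combining with the elementary identity $\sum_{\delta \in \{0,1\}^n}(-1)^{|\delta|}=0$ (valid for $n \ge 1$), one concludes $v^F_m(K_n) - v^F_m(O) = 0$, giving the claim.

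The only step requiring genuine care is the passage from the defining condition of $F$-order $\le m$, which concerns exactly $m+1$ triangles, to the vanishing of the alternating sum over $n \ge m+1$ triangles. I do not expect this to be a real obstacle: once the semi-triple-point identification is in hand, a diagram with $n$ semi-triple points automatically contains at least $m+1$ of them, and so $v^F_m$ annihilates it. The remainder of the argument is the standard inclusion–exclusion bookkeeping familiar from the Vassiliev setting.
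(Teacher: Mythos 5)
Your proposal is correct and takes essentially the same route as the paper: the paper obtains Fact~\ref{fact:Fn} by combining Fact~\ref{fact:ex_Fn} (a nontrivial $F_n$-trivial knot $K_n$) with Lemma~\ref{lem:Fnsimi}, and your alternating-sum/inclusion--exclusion computation over the $n$ disjoint triangles is exactly a proof of the instance of Lemma~\ref{lem:Fnsimi} that is needed (with $K' = O$). The only difference is that you inline that lemma's standard argument instead of citing it, which is fine.
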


Here, by using Lemma \ref{lem:gpv-simi} (Lemma \ref{lem:Fnsimi}, resp.),
Fact \ref{fact:ex_GPV} (Fact \ref{fact:ex_Fn}, resp.) implies Fact \ref{fact:GPV} (Fact \ref{fact:Fn}, resp.)

\begin{Lemma}[Lemma~1 of \cite{IS}]
\label{lem:gpv-simi}
If $K$ is ${\gpv}_n$-similar to $K'$,  then
\begin{align*}
v^{\gpv}_m(K)&=v^{\gpv}_m(K') & (m<n).
\end{align*}
\end{Lemma}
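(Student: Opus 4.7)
The plan is to express $v_m^{\gpv}(K) - v_m^{\gpv}(K')$ as an alternating sum over subsets of the $n$ blocks $A_1, \dots, A_n$ witnessing the $\gpv_n$-similarity, and then use the semi-virtual identity (\ref{align:semiv}) to rewrite that sum as a combination of values of $v_m^{\gpv}$ on diagrams each carrying at least $n$ semi-virtual crossings. Since the extension of any invariant of $\gpv$-order $\le m$ vanishes on diagrams with at least $m+1$ semi-virtual crossings, and $m+1 \le n$ by hypothesis, every term is zero, which forces $v_m^{\gpv}(K) = v_m^{\gpv}(K')$.

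First I would fix a diagram $\widetilde{K}$ of $K$ and disjoint sets $A_1, \dots, A_n$ of real crossings of $\widetilde K$ such that, for every nonempty $T \subseteq \{1,\dots,n\}$, virtualizing the crossings in $\bigcup_{i \in T} A_i$ transforms $\widetilde{K}$ into a diagram $\widetilde{K}_T$ of $K'$, while $\widetilde{K}_\emptyset = \widetilde{K}$ represents $K$. Since $\sum_{T \ne \emptyset}(-1)^{|T|} = -1$, the alternating sum
\[
\sum_{T \subseteq \{1, \dots, n\}} (-1)^{|T|}\, v_m^{\gpv}(\widetilde{K}_T)
\]
telescopes to $v_m^{\gpv}(K) - v_m^{\gpv}(K')$, so the task reduces to proving that this sum vanishes.

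For the refinement, write $V_c$ for the operator on $\mathbb{Z}[\mathcal{VD}]$ that virtualizes a single real crossing $c$, and enumerate $A_i = \{c_1^{(i)}, \dots, c_{k_i}^{(i)}\}$. The telescoping identity
\[
1 - V_{c_1^{(i)}} \cdots V_{c_{k_i}^{(i)}} = \sum_{j=1}^{k_i} V_{c_1^{(i)}} \cdots V_{c_{j-1}^{(i)}}\bigl(1 - V_{c_j^{(i)}}\bigr),
\]
combined with the fact that $(1 - V_c)$ replaces the real crossing $c$ by a semi-virtual crossing (via (\ref{align:semiv})), displays each factor $\bigl(1 - V_{c_1^{(i)}} \cdots V_{c_{k_i}^{(i)}}\bigr)$ as a signed sum of formal diagrams, each containing exactly one semi-virtual crossing inside $A_i$. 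Multiplying the factors over $i = 1, \dots, n$ and using that the blocks $A_i$ are pairwise disjoint (so the $V_c$'s commute and the semi-virtual marks end up on distinct crossings), the operator $\prod_{i=1}^{n}\bigl(1 - V_{c_1^{(i)}} \cdots V_{c_{k_i}^{(i)}}\bigr)$ applied to $\widetilde K$ expands as a signed sum of formal diagrams, each carrying $n$ distinct semi-virtual crossings. Extending $v_m^{\gpv}$ linearly to $\mathbb{Z}[\mathcal{VD}]$, its value on the full expansion coincides with the alternating sum above, and it annihilates each term because $m < n$.

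The only delicate step is the per-block telescoping: one must see that "virtualize none of $A_i$" minus "virtualize all of $A_i$" rewrites, via (\ref{align:semiv}), as a combination in which every term carries at least one semi-virtual crossing inside $A_i$. Once that is in place, disjointness of the blocks makes the bookkeeping routine, and the hypothesis $m < n$ is precisely what lets the definition of $\gpv$-order kill every surviving term.
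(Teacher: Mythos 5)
Your proposal is correct: the telescoping identity $1-V_{c_1}\cdots V_{c_k}=\sum_j V_{c_1}\cdots V_{c_{j-1}}(1-V_{c_j})$ applied blockwise, together with the disjointness of the $A_i$ and the identification (\ref{align:semiv}), does rewrite $\sum_T(-1)^{|T|}\widetilde{K}_T = v^{\gpv}_m(K)-v^{\gpv}_m(K')$ as a sum of diagrams each carrying $n\ge m+1$ semi-virtual crossings, on which $v^{\gpv}_m$ vanishes by definition. The paper itself does not reprove this lemma (it is imported as Lemma~1 of \cite{IS}), and your expansion is essentially the standard argument behind that result, so there is nothing to add.
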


\begin{Lemma}[Lemma~3 of \cite{IS}]
\label{lem:Fnsimi}
If $K$ is $F_n$-similar to $K'$, then
\begin{align*}
v^{F}_{m}(K)&=v^{F}_{m}(K') &(m<n).
\end{align*}
\end{Lemma}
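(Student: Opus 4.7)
The plan is to mimic the proof of Lemma \ref{lem:gpv-simi}, swapping virtualization and semi-virtual crossings for forbidden moves and semi-triple points. Fix diagrams $\widetilde{K},\widetilde{L}$ of $K,K'$ together with the disjoint collection $A_1,\ldots,A_n$ of sets of triangles supplied by the definition of $F_n$-similarity, and for each $S \subseteq \{1,\ldots,n\}$ let $D_S$ denote the diagram obtained from $\widetilde K$ by applying a forbidden move at every triangle in $\bigcup_{i \in S} A_i$. By hypothesis, $D_\emptyset = \widetilde K$ and $D_S = \widetilde L$ for every nonempty $S$.

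First I would evaluate the alternating sum
\[
\Sigma \;=\; \sum_{S \subseteq \{1,\ldots,n\}} (-1)^{|S|}\, v^F_m(D_S) \;=\; v^F_m(K) - v^F_m(K'),
\]
where the last equality uses $\sum_S(-1)^{|S|}=0$ to collapse the contributions of the nonempty subsets. It therefore suffices to prove $\Sigma = 0$.

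To do so, I would rewrite $\Sigma$ using the formal relations (\ref{align:semiOF}) and (\ref{align:semiLF}). For each triangle $t$, let $F_t$ be the operator on $\mathbb Z[\mathcal{VD}]$ applying a forbidden move at $t$, and let $W_t = \mathrm{id} - F_t$, so that a single $W_t$ converts the triangle at $t$ into one semi-triple point. Set $V_i = \mathrm{id} - \prod_{t \in A_i} F_t$; expanding $\prod_{t \in A_i}(\mathrm{id}-W_t)$ by inclusion-exclusion gives $V_i = \sum_{\emptyset \ne T_i \subseteq A_i}(-1)^{|T_i|+1}\prod_{t \in T_i}W_t$, so every term of $V_i$ contributes at least one semi-triple point located inside $A_i$. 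Because the $A_i$ are pairwise disjoint, the operators $V_i$ commute, and $(V_1V_2\cdots V_n)\widetilde K$ expands as a $\mathbb Z$-linear combination of diagrams each carrying at least $n$ semi-triple points on pairwise disjoint triangles. Since $n \ge m+1$ and $v^F_m$ has $F$-order $\le m$, Definition \ref{f-order} forces $v^F_m$ to annihilate every such diagram, whence $\Sigma = v^F_m\bigl((V_1\cdots V_n)\widetilde K\bigr) = 0$ and $v^F_m(K) = v^F_m(K')$. The only delicate step is this inclusion-exclusion bookkeeping — verifying that after expansion each surviving term carries a semi-triple point in each $A_i$, so that Definition \ref{f-order} applies — after which the conclusion is immediate.
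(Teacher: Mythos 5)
Your argument is correct: writing the alternating sum over subfamilies of $\{A_1,\ldots,A_n\}$ as $v^F_m\bigl((\mathrm{id}-F_{A_1})\cdots(\mathrm{id}-F_{A_n})\widetilde K\bigr)$, expanding each factor via the semi-triple-point relations (\ref{align:semiOF})--(\ref{align:semiLF}), and noting that every resulting diagram carries at least $n\ge m+1$ semi-triple points on disjoint triangles is exactly the standard argument; the present paper does not reprove this lemma (it quotes Lemma~3 of \cite{IS}), and your proof matches the intended one there, in parallel with the $\gpv$-order case of Lemma~\ref{lem:gpv-simi}. No gaps beyond the bookkeeping you already flag, which works because the triangles in $\bigcup_i A_i$ are pairwise disjoint, so the operators commute and the semi-triple points coexist.
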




\section{Main Results}
\label{sec:results}

For $F_n$-similarity, we generalize Fact \ref{fact:ex_Fn} to Theorem \ref{thm:Fn-simi}.  

\begin{theorem}\label{thm1}
Let $L$ be a given virtual link.  Let $n$ and $\ell$ be positive integers, there exists $L^\ell _n$ such that $L^\ell _n$ is $F_n$- similar to $L$.
\label{thm:Fn-simi}
\end{theorem}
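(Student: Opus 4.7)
The plan is to promote Fact~\ref{fact:ex_Fn} from the unknot to an arbitrary virtual link $L$, and at the same time introduce the parameter $\ell$ which produces an infinite family. Roughly, I will take a diagram of $L$, excise a short arc on one component, and splice in a copy of an $F_n$-trivial virtual knot $K_n^\ell$ (from a modified version of Fact~\ref{fact:ex_Fn}) that has been cut open into a long (tangle) version. Since $K_n^\ell$ can be trivialized by forbidden moves applied at $n$ disjoint triangles, the same $n$ triangles, viewed inside the spliced diagram, transform $L_n^\ell$ back to $L$, witnessing $F_n$-similarity.

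First I would upgrade Fact~\ref{fact:ex_Fn} to an infinite family: for each $\ell$, construct an $F_n$-trivial (based / long) virtual knot $K_n^\ell$ such that the $K_n^\ell$'s are pairwise non-equivalent. A natural way is to start from the $F_n$-trivial virtual knot produced in \cite{IS} and take its $\ell$-fold connected sum along the distinguished arc, or to stack $\ell$ copies of its characteristic Gauss-diagram block. The disjoint triangles used to unknot a single copy appear inside disjoint sub-disks, so after $\ell$-fold repetition one still has $n$ disjoint triangles whose simultaneous forbidden moves trivialize $K_n^\ell$. Distinctness for different $\ell$ can be checked by a classical invariant that grows with $\ell$, such as the writhe-normalized Kauffman or arrow polynomial, or the number of real crossings after minimization.

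Next, fix a virtual link diagram $\widetilde{L}$ of $L$ and pick a point $p$ on one component of $\widetilde{L}$ not lying on any (real or virtual) crossing. In a small disk $d\subset\mathbb{R}^2$ around $p$ containing only a simple arc of $\widetilde{L}$, replace that arc by the long version of $K_n^\ell$, producing a virtual link diagram $\widetilde{L_n^\ell}$ of a new virtual link $L_n^\ell$. The $n$ disjoint triangles $A_1,\dots,A_n$ that trivialize $K_n^\ell$ sit inside $d$, hence are automatically disjoint from the rest of $\widetilde{L}$ and from each other; applying forbidden moves at every nonempty subfamily of $\{A_i\}$ inside $d$ turns $\widetilde{L_n^\ell}$ into $\widetilde{L}$. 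This is exactly the condition in the definition of $F_n$-similarity, so $L_n^\ell$ is $F_n$-similar to $L$.

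Finally, to secure the ``there exists'' (and, implicitly, to make the construction nontrivial for each $\ell$) I would verify that $L_n^\ell\neq L_n^{\ell'}$ for $\ell\neq\ell'$, again by a classical invariant inherited from the inserted $K_n^\ell$ summand. The main obstacle I foresee is the second step: one has to be careful that the triangles used to unknot $K_n^\ell$ really are triangles in the sense of Definition~\ref{dfn_triangle} after the splicing (the dotted strand connections in Fig.~\ref{fig:signs of triangle disks} must still close up correctly), and that the $\ell$-fold iteration in the first step does not break the disjointness of the $n$ triangles. Both of these can be arranged by performing the splicing strictly inside the small disk $d$ on which the original construction of Fact~\ref{fact:ex_Fn} is already localized, so the argument reduces to the local model already handled in \cite{IS}.
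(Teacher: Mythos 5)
Your overall strategy is the same as the paper's: take the $F_n$-trivial knot of Fact~\ref{fact:ex_Fn} (the closure $\hat{b}(n)$ of \cite{IS}), splice it into a diagram of $L$ so that the $n$ disjoint families of triangles that trivialize it also witness $F_n$-similarity of the composite to $L$, and then distinguish the members of the family by a polynomial invariant. The $F_n$-similarity part of your argument is fine, and localizing the triangles inside the splicing disk is exactly how the paper's diagrammatic connected sum $D\#D_n^{\ell}$ works.

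The genuine gap is the distinctness step, which is in fact the entire technical content of the paper's proof and which you leave as an unverified assertion (``a classical invariant that grows with $\ell$''). Two things go wrong with leaving it at that. First, your family is built by $\ell$-fold connected sum of copies of the basic $F_n$-trivial knot, so for any multiplicative invariant to grow you must first prove that its value on that one knot is nontrivial (not a unit); nothing in the proposal establishes this, and it is not automatic, since all these knots are $F_n$-trivial, hence (by Lemma~\ref{lem:Fnsimi} and Corollary~\ref{cor2}) invisible to all finite type invariants of low $F$-order and $\gpv$-order, while ``number of real crossings after minimization'' is not an invariant you can actually compute. Second, even granting $K_n^{\ell}\neq K_n^{\ell'}$, distinctness of the spliced links $L_n^{\ell}\neq L_n^{\ell'}$ does not follow formally, because connected sum of virtual knots is only defined diagrammatically and can behave badly; one must compute an invariant of the composite. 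The paper avoids both problems by generating the family with $\ell$-full twists on $\hat{b}(n)$ and tracking the maximal degree of Manturov's Jones polynomial \cite{manturov} via Kauffman states: the twists force the maximal degree of $J(K_n^{\ell})$ to be $\gamma_0+2\ell$, and additivity of maximal degrees under the diagrammatic connected sum gives $\gamma_1+\gamma_0+2\ell-1$ for $J(L\#K_n^{\ell})$, so the composites are pairwise distinct by construction, with no need to know that $J(\hat{b}(n))$ itself is nontrivial. To complete your version you would have to supply an analogous explicit computation (for instance, verify that the Jones polynomial of the basic $F_n$-trivial knot has nontrivial span and that the bracket factors under your splicing), or simply adopt the full-twist mechanism.
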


\begin{theorem}\label{thm2}
Let $v^{\gpv}_i$ $($$v^{F}_i$, resp.$)$ be a finite type invariant of $\gpv$-order $i$ $($$F$-order $i$$)$.  Then,   
\[\{ v ~|~ v=v^{\gpv}_i (i \le 2n+1) \} \subset \{ v ~|~ v=v^{F}_i (i \le n) \}.\]  
\label{thm:GPVFn}
\end{theorem}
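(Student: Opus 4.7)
The plan is to prove that every finite type invariant $v$ of $\gpv$-order $\le 2n+1$ also has $F$-order $\le n$. By Definition~\ref{f-order} this amounts to showing that the linear extension of $v$ to $\mathbb{Z}[\mathcal{VD}]$ vanishes on every diagram carrying $n+1$ semi-triple points. The $\gpv$-order hypothesis already gives such vanishing on every diagram carrying at least $2n+2$ semi-virtual crossings, so it suffices to exhibit, modulo generalized Reidemeister moves, every diagram with $n+1$ semi-triple points as a $\mathbb{Z}$-linear combination of diagrams each carrying at least $2n+2$ semi-virtual crossings.

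The reduction rests on the following key lemma: the formal expression (\ref{align:semiOF}) or (\ref{align:semiLF}) representing a single semi-triple point is equal, modulo generalized Reidemeister moves, to a $\mathbb{Z}$-linear combination of diagrams each containing at least two semi-virtual crossings inside the ambient triangle. Granting this lemma, I apply it once to each of the $n+1$ disjoint triangles of the given diagram. Since these triangles are pairwise disjoint, the semi-virtual crossings produced by distinct applications land in disjoint parts of the diagram and do not interact; the resulting linear combination therefore consists of diagrams with at least $2(n+1)=2n+2$ semi-virtual crossings, on which $v$ vanishes.

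To prove the key lemma I would expand each of the three real crossings inside the triangle using the defining identity (\ref{align:semiv}), writing both the positive-triangle diagram $D^+$ and the negative-triangle diagram $D^-$ appearing in (\ref{align:semiOF}) or (\ref{align:semiLF}) as $D^\pm = \sum_{T\subseteq\{1,2,3\}} D^\pm_T$, where $D^\pm_T$ has the crossings indexed by $T$ semi-virtual and the remaining ones virtual. The semi-triple point then equals $\sum_T (D^+_T - D^-_T)$, and it is enough to check that $D^+_T$ and $D^-_T$ represent the same virtual knot whenever $|T|\le 1$. When $|T|=0$, all three crossings in the triangle are virtual, and the forbidden move on this all-virtual configuration is realized by a composition of detour moves from Fig.~\ref{fig:GRM}, so $D^+_\emptyset$ and $D^-_\emptyset$ are virtually isotopic. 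When $|T|=1$, the two remaining virtual crossings provide enough freedom for a detour argument to carry $D^+_T$ into $D^-_T$, so those terms cancel as well. Only the terms with $|T|\ge 2$ survive, which proves the lemma.

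The main obstacle is the $|T|=1$ case of the key lemma. It calls for a case analysis that ranges over the four triangle types in Fig.~\ref{fig:triangle disks}, both the $OF$ and the $LF$ types of forbidden move in Fig.~\ref{fig:forbidden}, and the three possible positions of the surviving semi-virtual crossing inside the triangle. In each configuration one has to exhibit an explicit sequence of generalized Reidemeister moves that uses the two virtual crossings as detour routes to identify $D^+_T$ with $D^-_T$. Once these diagrammatic checks are complete, the rest of the proof is formal bookkeeping of semi-virtual crossings.
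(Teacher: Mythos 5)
Your proposal is correct and takes essentially the same route as the paper: the paper's Lemma~\ref{lem:F2GPV} (a forbidden move is realized, up to generalized Reidemeister moves, by virtualization at $A_1$, $A_2$, or $A_1\cup A_2$) is exactly the cancellation statement your key lemma rests on, and the paper likewise rewrites each semi-triple point as a combination of diagrams carrying two semi-virtual crossings, so that $n+1$ disjoint semi-triple points give at least $2n+2$ semi-virtual crossings, on which any invariant of $\gpv$-order $\le 2n+1$ vanishes. The only cosmetic difference is that the paper expands just the two crossings of the moved strand (keeping the third crossing of the triangle real), whereas you expand all three and cancel the $|T|\le 1$ terms; your extra case is disposed of by the same detour/Gauss-diagram observation, so the diagrammatic checks you flag are exactly the ones carried out in the proof of Lemma~\ref{lem:F2GPV}.
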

It is known that $\{v~|~ v=v_n^{\gpv}~~(n \in \mathbb{N}) \}$ is a complete invariant of virtual knots by \cite{GPV}.  Thus, by Theorem~\ref{thm:GPVFn}, $\{v~|~ v=v_n^F~~(n \in \mathbb{N}) \}$ is also a complete invariant of virtual knots.

\begin{proposition}
\label{prop:gpv&gpv}
$\{v~|~v=v_m^{\gpv}~(m \le 2(n+1)) \}$ is a strictly stronger long virtual knot invariant than $\{v~|~v=v_m^{\gpv}~(m \le 2n) \}$.  There exist two classical knots $K$ and $K'$ such that the former detects the difference of them and the latter does not.   \end{proposition}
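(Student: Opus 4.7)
The inclusion $\{v \mid v = v_m^{\gpv},\ m \le 2n\} \subseteq \{v \mid v = v_m^{\gpv},\ m \le 2(n+1)\}$ is tautological, so the entire content of the proposition is the strict containment, witnessed by a pair of classical knots (regarded as long virtual knots by cutting at a basepoint). My plan is to produce such a pair by combining Fact~\ref{fact:GPV} with one concrete classical Vassiliev invariant of order $2(n+1)$.

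Concretely, I would apply Fact~\ref{fact:GPV} to $K = O$ (the unknot) with the fact's parameter $n$ specialized to $2n+1$: this yields, for each $\ell \ge 1$, a classical knot $K^{\ell}_{2n+1}$ with $v^{\gpv}_m(K^{\ell}_{2n+1}) = v^{\gpv}_m(O)$ for every $m \le 2n$ and every finite type invariant $v^{\gpv}_m$. Setting $K := O$ and $K' := K^{\ell_0}_{2n+1}$ for an $\ell_0$ to be chosen, the pair agrees on every GPV invariant of order at most $2n$. For a separating invariant of order at most $2(n+1)$, I would use the coefficient $a_{2(n+1)}$ of $z^{2(n+1)}$ in the Conway polynomial: on classical knots this is a Vassiliev invariant of order exactly $2(n+1)$, hence by the Goussarov--Polyak--Viro correspondence \cite{GPV} it extends to a GPV invariant of order $2(n+1)$ on long virtual knots, and it vanishes on $O$.

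The main obstacle is choosing $\ell_0$ so that $a_{2(n+1)}(K^{\ell_0}_{2n+1}) \ne 0$, since Fact~\ref{fact:GPV} in isolation only controls invariants of order $\le 2n$ and leaves higher-order behavior unspecified. My plan here is to inspect the explicit construction underlying Fact~\ref{fact:ex_GPV} in \cite{IS}, where $K^{\ell}_{n}$ is built by iterating a local insertion whose effect on the Conway polynomial is computable and whose first nonvanishing contribution lies at $z^{2(n+1)}$ for appropriate $\ell$; since the family $\{K^\ell_{2n+1}\}_\ell$ contains infinitely many distinct classical knot types, some $\ell_0$ must realize a nonzero value of $a_{2(n+1)}$. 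If this direct computation proves awkward, a robust fallback is Habiro's $C_k$-equivalence theorem: since the space of classical Vassiliev invariants of order $2n+1$ is nontrivial, there exist $C_{2n+1}$-equivalent but $C_{2n+2}$-inequivalent classical knot pairs, and such a pair provides the required witness by an invariant of order at most $2(n+1)$ via \cite{GPV}.
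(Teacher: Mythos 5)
Your overall skeleton is the same as the paper's: take witness knots coming from the construction of \cite{IS} (Fact~\ref{fact:GPV}/Fact~\ref{fact:ex_GPV}), so that all $\gpv$-invariants of order $\le 2n$ agree with those of the base knot, and separate the pair by the Conway coefficient of the next even order, which is a $\gpv$-invariant of order $\le 2(n+1)$ on long virtual knots. However, there is a genuine gap exactly at the step you flag as the ``main obstacle''. Your argument that some $\ell_0$ must satisfy $a_{2(n+1)}(K^{\ell_0}_{2n+1})\ne 0$ because the family contains infinitely many distinct classical knot types is a non sequitur: infinitely many pairwise distinct knots can all have vanishing $a_{2(n+1)}$ (there are infinitely many knots with trivial Conway polynomial), so distinctness of knot types gives no control over this single coefficient. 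The paper does not leave this open: it imports from \cite{IS} the explicit evaluation of the relevant Conway coefficient on the specific witness knot (this is the ``$-2=\mathrm{LHS}$'' step in the proof of Theorem~\ref{thm3}), and that concrete nonvanishing computation is precisely what your proposal lacks. Deferring to ``inspect the construction, whose first nonvanishing contribution lies at $z^{2(n+1)}$'' is a statement of what must be proved, not a proof.

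Two further points. First, your fallback via Habiro's $C_k$-equivalence does not repair the gap: a $C_{2n+2}$-inequivalent pair is separated by a classical Vassiliev invariant of order $\le 2n+1$, but the proposition requires separation by a $\gpv$-invariant of \emph{long virtual} knots of order $\le 2(n+1)$, and it is not established in \cite{GPV} (nor elsewhere in this paper) that an arbitrary Vassiliev invariant of classical knots extends to such an invariant; the Gauss diagram formulas of Goussarov's theorem are only guaranteed to be invariant on classical knots. Second, for the same reason your attribution of the extension of $a_{2(n+1)}$ to ``the Goussarov--Polyak--Viro correspondence'' is imprecise: the paper obtains the extension from the explicit Chmutov--Khoury--Rossi formula (Fact~\ref{thm:conway}), Polyak's invariance of the Conway polynomial for long virtual knots \cite{P}, and Proposition~\ref{prop:gpvVa}, and this special feature of the Conway coefficients is exactly why they, rather than an arbitrary distinguishing Vassiliev invariant, are used as the separating invariants.
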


\begin{theorem}\label{thm3}
For every order, non-trivial finite type invariants of $F$-order exist.

Further, denoting a finite type invariant of $F$-order $m$ by $v_m^F$, $\{v~|~v=v_m^F~(m \le n+1) \}$ is a strictly stronger long virtual knot invariant than $\{v~|~v=v_m^F~(m \le n) \}$.  There exist two classical knots $K$ and $K'$ such that the former detects the difference of them and the latter does not.   
\label{thm:nontriFn}
\end{theorem}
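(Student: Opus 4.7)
The existence of non-trivial finite type invariants at every $F$-order will drop out of the strict inclusion, so the plan focuses on producing, for each $n$, a pair of classical knots $K, K'$ witnessing
\[
\{v\mid v=v^F_m,\ m\le n+1\}\supsetneq\{v\mid v=v^F_m,\ m\le n\}.
\]

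The strategy is to transport strict inclusion from the $\gpv$-filtration to the $F$-filtration through Theorem \ref{thm:GPVFn}. First, I would invoke Proposition \ref{prop:gpv&gpv} to obtain classical knots $K$ and $K'$ such that some $v^{\gpv}_m$ with $m\le 2(n+1)=2n+2$ distinguishes them, while every $v^{\gpv}_m$ with $m\le 2n$ agrees on them. Since $2n+2\le 2(n+1)+1$, Theorem \ref{thm:GPVFn} applied at level $n+1$ guarantees that the distinguishing $v^{\gpv}_m$ is simultaneously a finite type invariant of $F$-order at most $n+1$, which supplies the positive side of the claim.

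The delicate step is the negative side: showing no $v^F_m$ with $m\le n$ separates $K$ from $K'$. By Lemma \ref{lem:Fnsimi} it suffices to exhibit $K$ and $K'$ as $F_{n+1}$-similar, i.e.\ to find in some diagram of $K$ a collection of $n+1$ pairwise disjoint triangle disks $A_1,\dots,A_{n+1}$ such that applying forbidden moves at the triangles of any non-empty subfamily turns $K$ into $K'$. My plan here is to re-engineer the construction behind Proposition \ref{prop:gpv&gpv}: rather than accept an arbitrary distinguishing pair, build $K$ and $K'$ out of $n+1$ independent ``triangle gadgets'' placed in pairwise disjoint regions of the diagram, each realizing one forbidden-move transition at its own $A_i$, in the spirit of the constructions underlying Theorem \ref{thm:Fn-simi} and Fact \ref{fact:ex_Fn}. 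The chord pattern within each gadget should be chosen so that (i) any non-empty subfamily of the $A_i$ yields, after the prescribed forbidden moves, a diagram Reidemeister-equivalent to the \emph{same} target $K'$, and (ii) the full Gauss diagram of $K$ versus $K'$ still carries the chord configuration picked up by the chosen $v^{\gpv}_{2n+2}$.

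The main obstacle is precisely this dual control: $F_{n+1}$-similarity forces $2^{n+1}-1$ distinct forbidden-move sequences on $K$ to land on one and the same diagram $K'$, and this must be arranged without destroying the $\gpv$-discrimination at order $2n+2$. Once such a pair is produced, Lemma \ref{lem:Fnsimi} gives $v^F_m(K)=v^F_m(K')$ for all $m\le n$, completing the strict inclusion; and the first assertion of the theorem then follows immediately, since each strict jump $\{v^F_m\mid m\le n\}\subsetneq\{v^F_m\mid m\le n+1\}$ exhibits an invariant of $F$-order exactly $n+1$, hence non-trivial.
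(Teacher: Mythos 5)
Your positive side is essentially correct: once a pair is distinguished by some $v^{\gpv}_m$ with $m\le 2n+2\le 2(n+1)+1$, Theorem \ref{thm:GPVFn} makes that invariant an $F$-order $\le n+1$ invariant. But the entire content of the theorem sits in the negative side, and there your proposal stops at a plan. Proposition \ref{prop:gpv&gpv} only gives a pair agreeing on all $\gpv$-invariants of order $\le 2n$; since Theorem \ref{thm:GPVFn} says the $\gpv$-order $\le 2n+1$ invariants are a (possibly proper) subset of the $F$-order $\le n$ invariants, such agreement says nothing about agreement under $F$-order $\le n$ invariants. You correctly see that what is needed is $F_{n+1}$-similarity of the pair, via Lemma \ref{lem:Fnsimi}, but the ``triangle gadget'' construction that should deliver it is never carried out: you yourself flag as ``the main obstacle'' the requirement that all $2^{n+1}-1$ nonempty subfamilies of forbidden moves produce one and the same knot while an order-$(2n+2)$ $\gpv$-invariant still separates the pair, and you leave that obstacle unresolved. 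As written, this is a genuine gap, not a detail.

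For comparison, the paper does not build a new pair at all: it takes the $F_{n+1}$-trivial knots already constructed in \cite{IS} (Fact \ref{fact:ex_Fn}) together with the unknot, so the negative side is immediate from Lemma \ref{lem:Fnsimi}; for the positive side it uses the Conway coefficient $c_{2n+2}$, which by the Gauss diagram formula of Fact \ref{thm:conway} and Proposition \ref{prop:gpvVa} is a $\gpv$-invariant of order $\le 2n+2$, hence of $F$-order $\le n+1$ by Theorem \ref{thm:GPVFn}, and which takes the nonzero value $-2$ on those knots. An induction on the coefficients in the expansion $c_{2n+2}=\lambda^{(2n+2)}_{n+1}v^F_{n+1}+\sum_{i\le n}\lambda^{(2n+2)}_i v^F_i$ then forces $\lambda^{(2n+2)}_{n+1}\neq 0$, giving simultaneously the existence of nontrivial invariants of every $F$-order, the strictness statement, and Proposition \ref{prop:gpv&gpv} itself; note that in the paper that proposition is an output of this argument, so quoting it as an input and then proposing to ``re-engineer'' its construction also inverts the logical order. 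To repair your route you would need to supply exactly what the paper supplies: an explicit family of pairs that are $F_{n+1}$-similar by construction, together with an explicit invariant of $F$-order $\le n+1$ (such as $c_{2n+2}$) that separates them.
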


\begin{corollary}
$\{v~|~v=v_m^F~(m \le n+1) \}$ is a strictly stronger virtual long knot invariant than $\{ v ~|~ v=v^{\gpv}_m (m \le 2n+1) \}$.  There exist two classical knots $K$ and $K'$ such that the former detects the difference of them and the latter does not. 
\end{corollary}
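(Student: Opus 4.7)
The corollary follows by combining Theorem~\ref{thm:GPVFn} with Theorem~\ref{thm:nontriFn}, so the plan is essentially to chain the two inclusions/strictness statements.

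First, I would record the comparison of strengths. By Theorem~\ref{thm:GPVFn}, we have the containment
\[
\{v \mid v = v^{\gpv}_m\ (m\le 2n+1)\} \;\subset\; \{v \mid v = v^{F}_m\ (m\le n)\} \;\subset\; \{v \mid v = v^{F}_m\ (m\le n+1)\},
\]
where the second inclusion is immediate from the definition of $F$-order $\le n$ (any invariant that vanishes on the alternating sums over $n+1$ disjoint triangles automatically vanishes on the sums over $n+2$ disjoint triangles). Hence any pair of virtual knots distinguished by some invariant on the left is also distinguished by some invariant on the right, so the $F$-order $\le n+1$ family is at least as strong as the $\gpv$-order $\le 2n+1$ family as invariants of long virtual knots.

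Next I would upgrade ``at least as strong'' to ``strictly stronger.'' Theorem~\ref{thm:nontriFn} supplies two classical knots $K$ and $K'$ that are separated by some finite type invariant of $F$-order $\le n+1$ but by no finite type invariant of $F$-order $\le n$. Applying the containment in Theorem~\ref{thm:GPVFn} to this same pair, every $\gpv$-order $\le 2n+1$ invariant is in particular an $F$-order $\le n$ invariant and therefore takes the same value on $K$ and $K'$. Thus the $\gpv$-order $\le 2n+1$ invariants fail to distinguish $K$ and $K'$ while the $F$-order $\le n+1$ invariants succeed, which is exactly the required strict inequality together with the classical-knot witness.

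Since the entire argument is just the logical composition of the two main theorems, there is no real technical obstacle here; the only thing to be careful about is the bookkeeping of the chain of inclusions (in particular, making sure that the strict separation produced by Theorem~\ref{thm:nontriFn} lands inside the correct subset, which it does because $\gpv$-order $\le 2n+1$ invariants sit inside $F$-order $\le n$ invariants, not just inside $F$-order $\le n+1$ invariants). The genuine work has already been done in establishing Theorem~\ref{thm:GPVFn} and Theorem~\ref{thm:nontriFn}.
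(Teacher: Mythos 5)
Your argument is correct and is exactly the derivation the paper intends: the corollary is stated immediately after Theorem~\ref{thm:nontriFn} precisely because it follows by combining the inclusion of Theorem~\ref{thm:GPVFn} (placing $\gpv$-order $\le 2n+1$ invariants inside the $F$-order $\le n$ family) with the strict separation and classical-knot witnesses of Theorem~\ref{thm:nontriFn}. No gap; your care in landing the witnesses inside the $F$-order $\le n$ family, not merely $F$-order $\le n+1$, is the only subtle point and you handle it correctly.
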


\section{Proof of Theorem \ref{thm:Fn-simi}}
\label{sub:Fn-simi}

To begin with, we recall the definition of the virtual knot $\hat{b}(k)$,
which is introduced by \cite{IS} (for the detail of $\hat{b}(k)$, see \cite[Section 4.~2]{IS}).   Let 
\begin{align*}
A:&=\parbox{38pt}{\includegraphics[width=38pt]{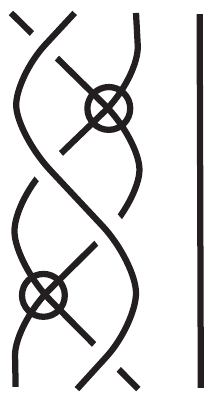}}~
&A^{-1}:=~\parbox{38pt}{\includegraphics[width=38pt]{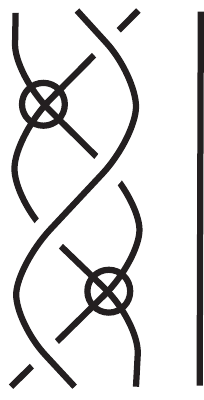}}~&
B:=~\parbox{38pt}{\includegraphics[width=38pt]{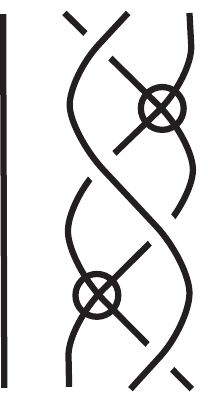}},~{\textrm{and}}
&B^{-1}:=~\parbox{38pt}{\includegraphics[width=38pt]{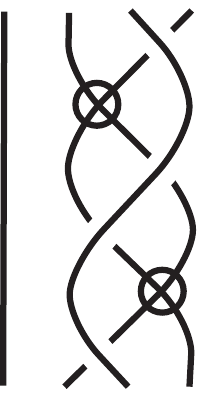}}, 
\end{align*}
and let $b(1)=A,$ $b(2)=[B,b(1)]$, $b(4u-1)=[B,b(4u-2)]$, $b(4u)=[A,b(4u-1)]$, $b(4u+1)=[A,b(4u)]$, $b(4u+2)=[B,b(4u+1)]$,  $(u \geq 1).$  The closure of $b(k)$ is defined by Fig.~\ref{closure}.

\begin{figure}[h!]
\centering
\includegraphics[width=4cm]{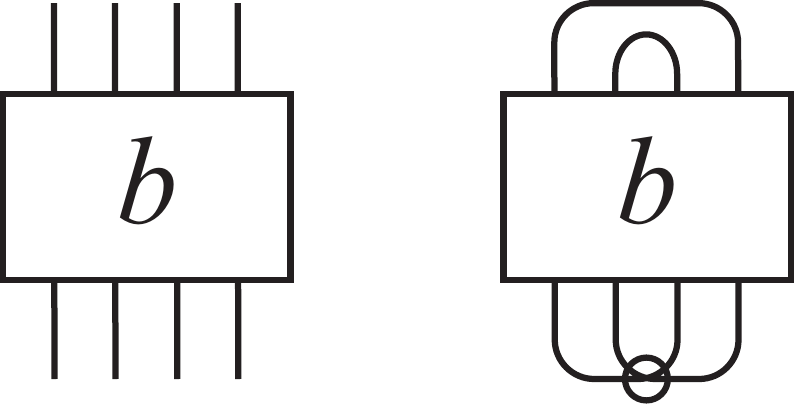}
\vspace{-0.3cm}
\caption{A braid $b$ and its closure $\hat{b}$.}\label{closure}
\end{figure}

\begin{figure}[htbp]
\centering
\includegraphics[width=10cm]{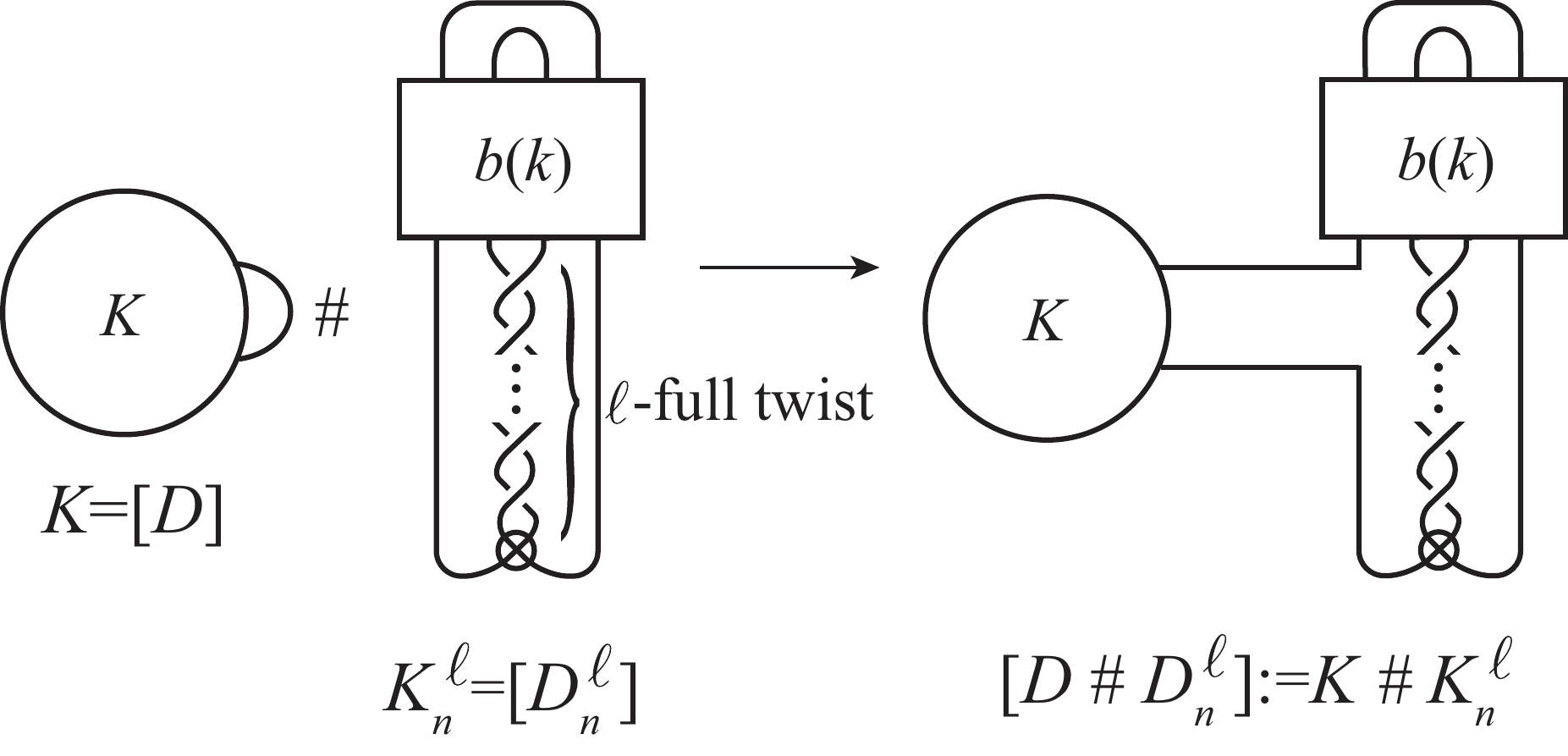}
\caption{We define $K \# K_n^{\ell}$ consisting of virtual knots $K$ and $K_n^{\ell}$ by the local
replacement for their virtual knot diagrams $D$ and $D_n^{\ell}$.  Square
brackets indicate equivalence classes of virtual knot diagrams.}
\label{connectedsum}
\end{figure}


Next, we start the proof of Theorem \ref{thm:Fn-simi}.  

\begin{proof}
(Step~1) Let $J(L)$ be a Jones polynomial of a given virtual link $L$, is defined by \cite[Formula~(1)]{manturov} (in \cite{manturov}, it is denoted by $X(L)$).  
Let $\gamma_0$ $=$ the maximal degree of the Jones polynomial $J (\hat{b}(n))$ ($n \in \mathbb{N}$).    
Let $S_{\gamma _0}$ be a Kauffman state with the maximal degree $\gamma _0$. Let $K^{0}_n$ $=$ $\hat{b}(n)$ and let $K^{\ell}_n$ be  a virtual knot obtained by applying $\ell$-full twists
to $\hat{b}(n)$.  By definition, using $S_{\gamma_0}$, there exists a Kauffman state implying the maximal degree of $J(K^{\ell}_n)$.
Then, the degree is represented by $\gamma_0 + 2 \ell$.
Thus, $K_n^{\ell} \neq K_n^{\ell '}$ $(\ell < \ell ')$.

\begin{figure}[htbp]
\centering
\includegraphics[width=6cm]{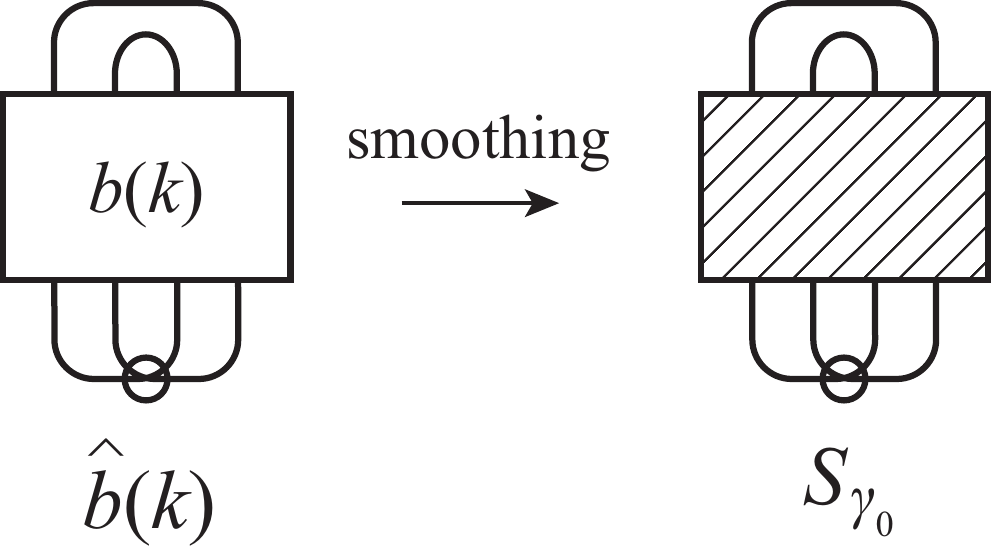}
\end{figure}

\begin{figure}[htbp]
\centering
\includegraphics[width=10cm]{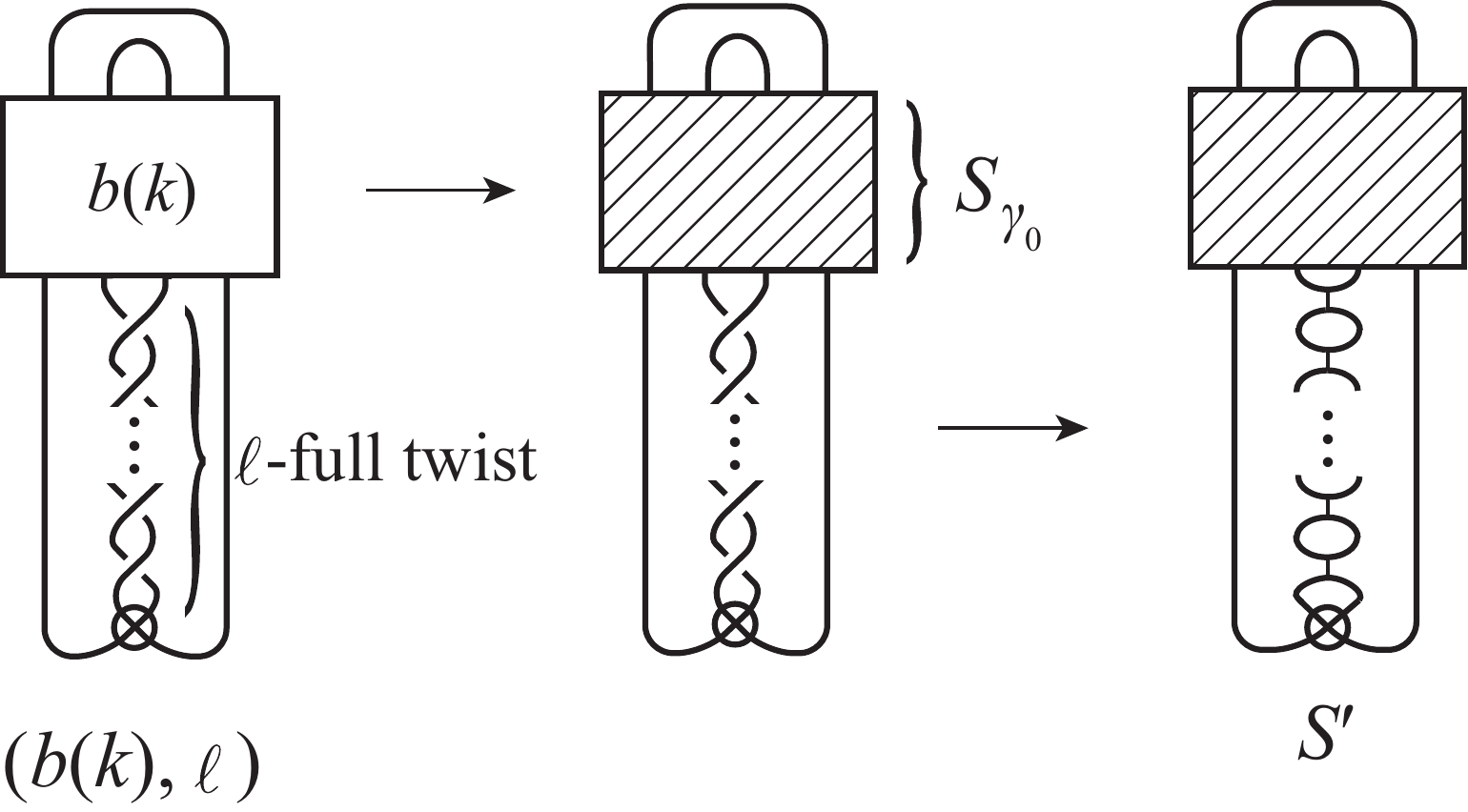}
\end{figure}

\noindent (Step~2) Let $L$ be a given virtual knot and let $\gamma_1$ be the maximal degree of $J(L)$.   
We firstly consider the case that $L$ is a virtual knot, and set $L=K$.    
Let $D$ be a virtual knot diagram and $D^{\ell}_n$ a virtual knot  diagram given by $\hat{b}(n)$ and $l$-full twists, as shown in the left half of Fig.~\ref{connectedsum}.     
Then, let $K \# K_n^{\ell}$ be the virtual knot having a virtual knot diagram $D \# D_n^{\ell}$,
as shown in the right half of Fig \ref{connectedsum}.    
Then, the maximal degree of $J (K \# K_n^{\ell})$ is $\gamma_1 + \gamma_0 + 2\ell-1$.  Here, if $\gamma_1 + \gamma_0 -1 <0$, we replace $\gamma _0$ with $\gamma _0' +2\ell _0$
by applying $\ell _0$-full twists, where $\ell_0$ is a sufficient large.  Then, it implies $\gamma_1 + \gamma_0' -1 \geq 0$  
Then, by comparing the maximal degree of $K_n^{\ell}$ and that of $K_n^{\ell'}$ $(\ell < \ell')$, we have $K \# K_n^{\ell} \neq K \# K_n^{\ell'}$.

We secondly consider the case that $L$ is a virtual link, but it is easy to extend the above case to the link case.  
\end{proof}

\section{High-order of $v_n^{\gpv}$ and $v_n^F$: proofs of Theorems~\ref{thm2} and \ref{thm3}}
\label{sec:GPVFn}
We prepare Lemma \ref{lem:F2GPV}.
Every forbidden move consists of exactly two virtualizations and generalized Reidemeister moves.

\begin{Lemma}
\label{lem:F2GPV}
Let $A_1$ and $A_2$ be the set of real crossings, as shown in Fig.~\ref{fig:GPVFn}.  Then, a single forbidden move is realized by virtualization at the crossings in $A_1$ $(A_2$ or $A_1 \cup A_2$, resp.$)$ up to generalized Reidemeister moves.  
\begin{figure}[htbp]
\centering
\includegraphics[width=12cm]{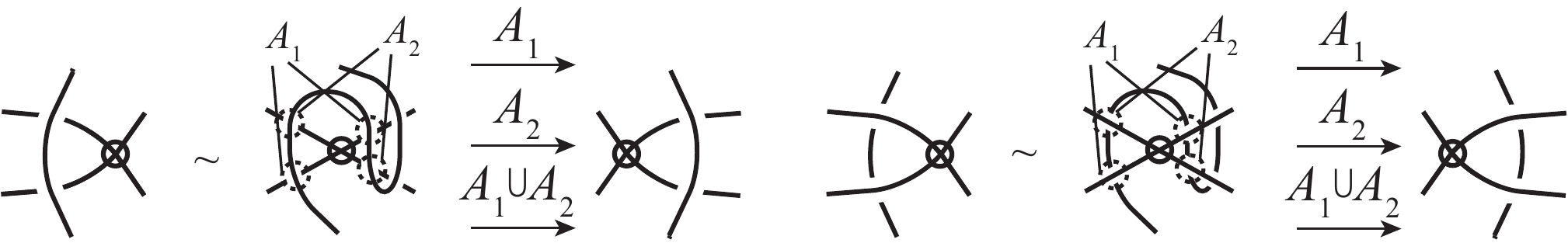}
\caption{forbidden move and virtualization}
\label{fig:GPVFn}
\end{figure}
\end{Lemma}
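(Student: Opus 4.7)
The plan is to treat the lemma as a purely local diagrammatic verification. First, I would recall from Figure~\ref{fig:forbidden} the local pictures of the two types of forbidden moves, OF and LF. In each, the move takes place inside a small triangular region whose three ``vertices'' are three crossings: two real crossings, which by hypothesis constitute the sets $A_1$ and $A_2$ of Figure~\ref{fig:GPVFn}, and one virtual crossing. Passing from the left-hand side of a forbidden move to the right-hand side amounts to pushing a strand across this triangle, and the reason the move is not a generalized Reidemeister move is precisely that it would require a real strand to slide past the virtual vertex.

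Next, I would handle the case of virtualizing only the crossing in $A_1$. After this virtualization the triangle has two virtual vertices and one real vertex (the remaining crossing in $A_2$). In this new configuration, the generalized Reidemeister moves already suffice to push the relevant strand across the triangle: the real arc can slide past each of the two virtual crossings by the mixed generalized Reidemeister move, combined with the purely virtual Reidemeister moves. After this sliding, I would re-identify the new virtual crossing at the position of $A_1$ with the virtual crossing that appears in the target picture of the forbidden move; what remains is, up to generalized Reidemeister moves, exactly the right-hand side of the forbidden move. The case of virtualizing only $A_2$ is symmetric.

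For $A_1\cup A_2$, I would virtualize both real crossings simultaneously. Now all three vertices of the triangle are virtual, so the middle strand may be isotoped across using purely virtual Reidemeister moves, and one again recovers the right-hand side of the forbidden move after relabelling the two new virtual crossings.

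The main obstacle is the bookkeeping: one must treat both types OF and LF of forbidden moves and, within each type, every choice of sign of the triangle from Figure~\ref{fig:signs of triangle disks}, without sign or orientation errors. The underlying argument is the same short sequence of generalized Reidemeister moves in every case, so I would write out one representative case in detail (for example a positive triangle of type OF) and invoke the evident symmetry to dispose of the remaining cases. I expect this to be the only delicate point of the write-up; no new ideas beyond the local moves listed in Figure~\ref{fig:GRM} should be required.
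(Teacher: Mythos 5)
Your identification of the relevant local configurations is the same as in the paper (the paper's proof is exactly this case-by-case check, displayed for $OF$ in Fig.~\ref{fig:KandK} and asserted analogously for $LF$): once one of the two real crossings of the forbidden-move triangle is virtualized, the triangle has one real and two virtual vertices and can be flipped by the mixed generalized Reidemeister move, and once both are virtualized it can be flipped by the purely virtual triangle move. The gap is in the conclusion you draw from this in each case. After virtualizing the crossing of $A_1$ and flipping the triangle, that crossing is \emph{still virtual}, whereas in the right-hand side of the forbidden move both triangle crossings are real; no generalized Reidemeister move converts a virtual crossing into a real one, so what you obtain is not ``exactly the right-hand side of the forbidden move,'' and the proposed ``re-identification'' of the new virtual crossing with the single virtual crossing of the target picture (which lies on a different pair of strands), or the ``relabelling'' of the two new virtual crossings in the $A_1\cup A_2$ case, is not a legitimate move. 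Taken literally, the statement you prove is false: if the two triangle crossings are the only real crossings of the whole diagram, virtualizing one of them produces a one-crossing diagram, hence the unknot, while the other side of the forbidden move can be a nontrivial virtual knot such as the virtual trefoil.

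What the lemma actually asserts, and what the proof of Theorem~\ref{thm:GPVFn} uses when it expands a semi-triple point into diagrams carrying two semi-virtual crossings, is the symmetric statement: applying the \emph{same} virtualizations ($A_1$, $A_2$, or $A_1\cup A_2$) to \emph{both} sides of a forbidden move yields diagrams related by generalized Reidemeister moves; equivalently, after those virtualizations the triangle flip itself becomes a mixed move (one real vertex left) or a virtual Reidemeister move III (no real vertex left). Your local-move analysis proves precisely this, so the repair is to restate the conclusion of each case rather than to add ideas; note also the minor inaccuracy that the mixed move slides the strand carrying the two virtual crossings across the real crossing, not ``the real arc past each of the two virtual crossings.''
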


\begin{proof}
First, consider $OF$.
If we apply virtualizations to crossings in $A_1$, $A_2$ and $A_1 \cup A_2$, 
we have one from the other of $OF$, as shown in (i), (ii) and (iii) of Fig.~\ref{fig:KandK},
respectively.

\begin{figure}[htbp]
\centering
\includegraphics[width=10cm]{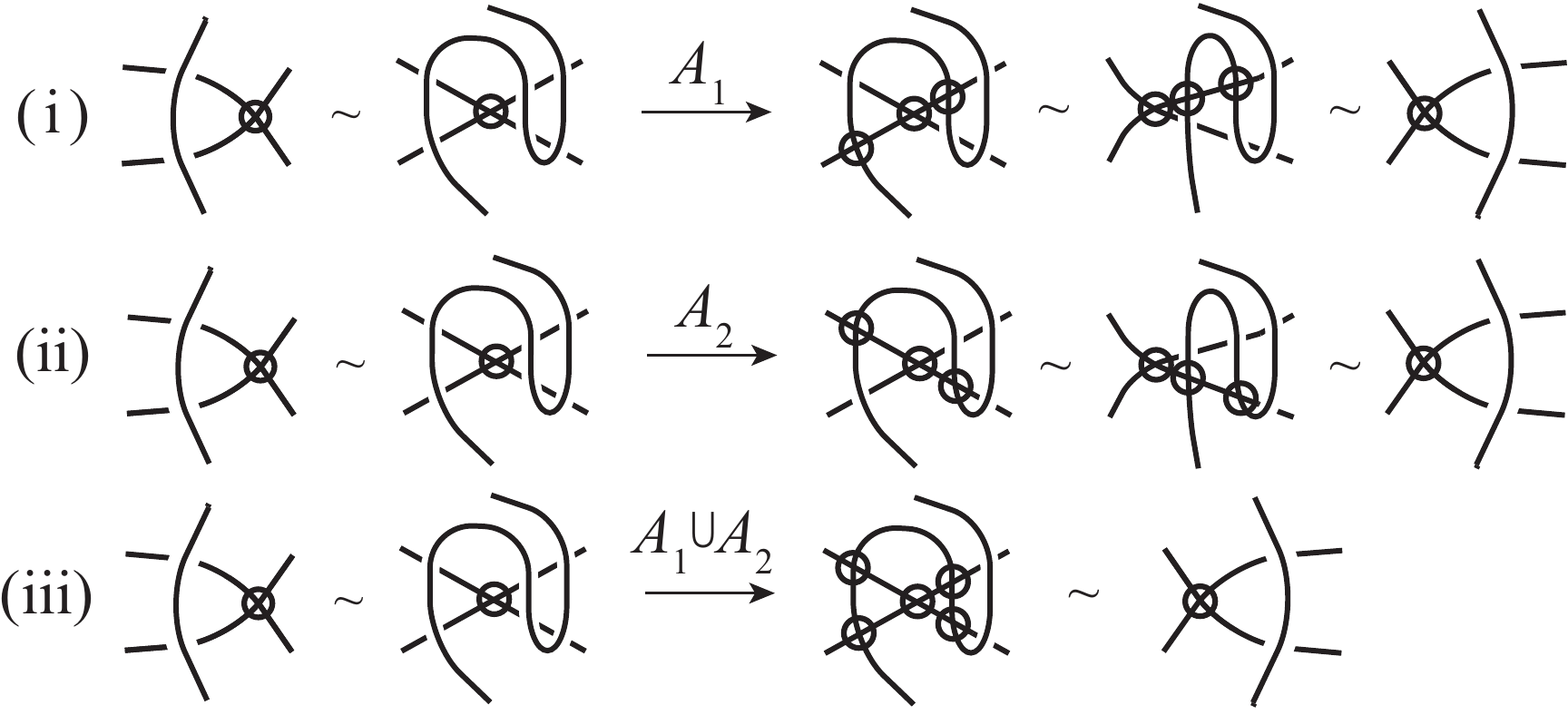}
\caption{The deformation of triangles of $OF$.}
\label{fig:KandK}
\end{figure}

Second, if we apply virtualizations to crossings in $A_1$, $A_2$ and $A_1 \cup A_2$ of $LF$, we also have one from the other for each case.

\end{proof}

\noindent ({\bf Proof of Theorem~\ref{thm:GPVFn}}.)
By Lemma \ref{lem:F2GPV} and relations (\ref{aling:triple1}) and (\ref{aling:triple2}),

\vspace{-0.2cm}
\begin{align}
v\left(~\parbox{30pt}{\includegraphics[width=30pt]{triplepoint1.pdf}}~\right)
&=v\left(~\parbox{30pt}{\includegraphics[width=30pt]{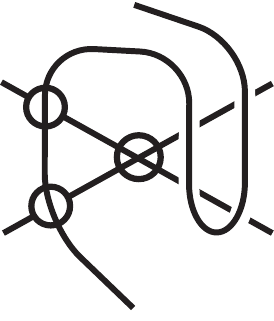}}~\right)
+v\left(~\parbox{30pt}{\includegraphics[width=30pt]{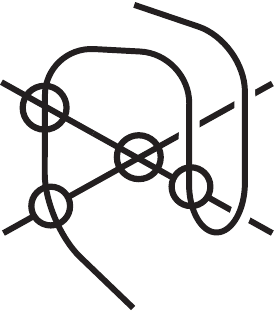}}~\right)  \label{aling:triple1}  \\ 
&+v\left(~\parbox{30pt}{\includegraphics[width=30pt]{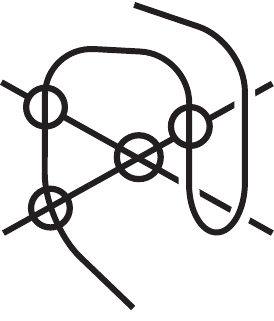}}~\right)
+v\left(~\parbox{30pt}{\includegraphics[width=30pt]{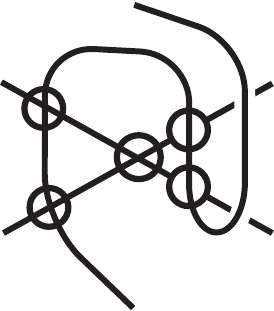}}~\right) \mbox{, and}\notag
\end{align}

\begin{align}
v\left(~\parbox{30pt}{\includegraphics[width=30pt]{triplepoint2.pdf}}~\right)
&=v\left(~\parbox{30pt}{\includegraphics[width=30pt]{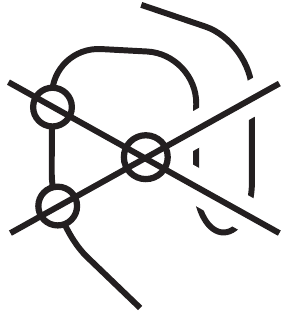}}~\right)
+v\left(~\parbox{30pt}{\includegraphics[width=30pt]{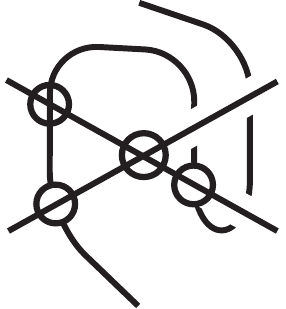}}~\right) \label{aling:triple2}\\ 
&+v\left(~\parbox{30pt}{\includegraphics[width=30pt]{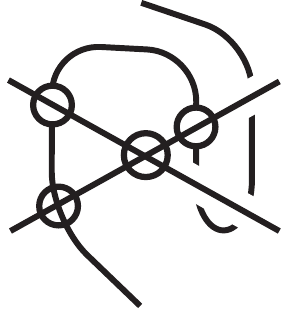}}~\right)
+v\left(~\parbox{30pt}{\includegraphics[width=30pt]{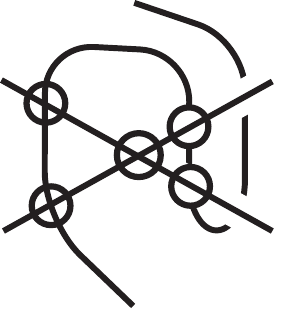}}~\right). \notag 
\end{align}

Let $D_n^T$ be a virtual knot diagram with $n$ semi-triple points
and let $D_n^S$ a virtual knot diagram with $n$ semi-virtual crossings.

Then, by the above equations (\ref{aling:triple1}) and (\ref{aling:triple2}), 

\[
v^{\gpv}_{2n+1} (D^{T}_{n+1})\stackrel{(\ref{aling:triple1}), (\ref{aling:triple2})}{=}\sum_{D^{S}_{2(n+1)}} v^{\gpv}_{2n+1} (D^{S}_{2(n+1)})  
\stackrel{\mbox{by def.}}{=} 0.
\]

Similarly, 
\[
v^{\gpv}_{2n} (D^{T}_{n+1}) \stackrel{(\ref{aling:triple1}), (\ref{aling:triple2})}{=} \sum_{D^{S}_{2(n+1)}} v^{\gpv}_{2n} (D^{S}_{2(n+1)})  \stackrel{\mbox{by def.}}{=} 0.
\]
$\hfill\Box$

By Lemma \ref{lem:F2GPV}, we have Corollary~\ref{cor2}.   
\begin{corollary}\label{cor2}
If $K$ is $F_{n+1}$-similar to $K'$, then, $K$ is  ${\gpv}_{2(n+1)}$-similar to $K'$.    
\end{corollary}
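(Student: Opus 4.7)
The plan is to read Corollary~\ref{cor2} as a direct combinatorial consequence of Lemma~\ref{lem:F2GPV}: each triangle in an $F_{n+1}$-witness contributes two disjoint sets of real crossings to a $\gpv_{2(n+1)}$-witness.

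Concretely, I would start from the data of an $F_{n+1}$-similarity: diagrams $\widetilde{K},\widetilde{K'}$ and $n+1$ pairwise disjoint triangles $T_1,\dots,T_{n+1}$ with the property that applying forbidden moves at the triangles indexed by any nonempty $I\subseteq\{1,\dots,n+1\}$ carries $\widetilde{K}$ to $\widetilde{K'}$ (up to generalized Reidemeister moves). Then for each $T_i$, Lemma~\ref{lem:F2GPV} supplies two nonempty sets $A_1^{(i)}$ and $A_2^{(i)}$ of real crossings located inside $T_i$ with the property that virtualizing the crossings of $A_1^{(i)}$, or of $A_2^{(i)}$, or of $A_1^{(i)}\cup A_2^{(i)}$, each realizes the single forbidden move at $T_i$ up to generalized Reidemeister moves.

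The candidate $\gpv_{2(n+1)}$-witness is the collection
\[
\mathcal{A}=\{A_1^{(1)},A_2^{(1)},A_1^{(2)},A_2^{(2)},\dots,A_1^{(n+1)},A_2^{(n+1)}\}.
\]
Pairwise disjointness is immediate: within a triangle the two sets are disjoint by construction in Lemma~\ref{lem:F2GPV}, and across different triangles the sets are disjoint because the triangles themselves are disjoint in $\widetilde{K}$. Now for any nonempty subfamily $S\subseteq\mathcal{A}$, let $I(S)=\{i:A_1^{(i)}\in S\text{ or }A_2^{(i)}\in S\}$, which is nonempty. Because the triangles are disjoint, all the virtualizations prescribed by $S$ can be performed locally and independently: inside each triangle $T_i$ with $i\in I(S)$, Lemma~\ref{lem:F2GPV} tells us that whichever of the three possibilities $(A_1^{(i)},\,A_2^{(i)},\,A_1^{(i)}\cup A_2^{(i)})$ is selected by $S$, the net local effect is the forbidden move at $T_i$ (up to generalized Reidemeister moves); for $i\notin I(S)$ no local change is made. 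Combining these local effects yields the diagram obtained from $\widetilde{K}$ by forbidden moves at the triangles in $I(S)$, which is $\widetilde{K'}$ by the $F_{n+1}$-similarity hypothesis. Thus virtualization along any nonempty subfamily of $\mathcal{A}$ transforms $\widetilde{K}$ into $\widetilde{K'}$, establishing $\gpv_{2(n+1)}$-similarity.

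The only real point of friction is the "up to generalized Reidemeister moves" clause in Lemma~\ref{lem:F2GPV}: one must be certain that when the forbidden-move realizations are carried out simultaneously in different triangles, the local Reidemeister tidy-ups in one triangle do not interfere with those in another. This is handled by the disjointness of the triangles — each realization happens inside a small disk around its triangle, so the local Reidemeister sequences can be chosen with disjoint supports and composed in any order. Once this is noted, the corollary follows without further computation.
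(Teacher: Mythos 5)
Your proposal is correct and takes essentially the same route as the paper, whose entire proof is the appeal to Lemma~\ref{lem:F2GPV}: you have simply made explicit the bookkeeping, namely two crossing sets per triangle, pairwise disjointness inherited from the disjointness of the triangles, and the trichotomy $A_1$, $A_2$, $A_1\cup A_2$ covering every nonempty selection of the $2(n+1)$ sets. The only cosmetic imprecision is that in the definition of $F_{n+1}$-similarity each $A_i$ may be a set of several disjoint triangles rather than a single triangle $T_i$; your construction extends verbatim by taking, for each family $A_i$, the unions of the corresponding crossing sets over all triangles in $A_i$.
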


Let $\mathcal P$ : Polayak algebra, i.e., ${\gpv}_n$-inv. $\in {\mathcal P}_{n}^*$.

\begin{fact}[\cite{GPV}]
Let $D$ be any diagram of a virtual knot $K$. The formula $K \mapsto I(D) \in {\mathcal P}$ defines a complete invariant of virtual knots.
\label{thm:gpv}
\end{fact}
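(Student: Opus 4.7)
The plan is to reproduce the completeness theorem of Goussarov, Polyak, and Viro \cite{GPV} in two stages: well-definedness of $I$ as a virtual knot invariant, and injectivity on virtual knot classes.

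For well-definedness, I would work with the map $J : \mathbb Z \mathcal{D} \to \mathbb Z \mathcal{D}$ introduced earlier in the paper and pass to the Polyak algebra $\mathcal{P}$, a quotient of $\mathbb Z \mathcal{D}$ designed so that the induced map $I(D) := [J(D)] \in \mathcal{P}$ is invariant under the generalized Reidemeister moves of Figure~\ref{fig:D_GRM}. For each Reidemeister move $D \leftrightarrow D'$, the difference $J(D) - J(D')$ is supported on sub-Gauss diagrams that share chords with the region where the move is performed, since sub-diagrams disjoint from that region contribute equally to both sides. The defining relations of $\mathcal{P}$ are tailored exactly to annihilate these local differences, so the check reduces to a finite case analysis, one Reidemeister move at a time.

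For injectivity, the key observation is that on the free module $\mathbb Z \mathcal{D}$ the map $J$ is already a bijection. By M\"obius inversion on the Boolean lattice of sub-Gauss diagrams one has
\[
D \;=\; \sum_{D' \subset D} (-1)^{|D| - |D'|}\, J(D'),
\]
where $|D|$ denotes the number of chords of $D$, so any $D$ is uniquely reconstructible from $J(D)$. The remaining content is that this inversion descends to the quotient: if $[J(D_1)] = [J(D_2)]$ in $\mathcal{P}$ then $D_1$ and $D_2$ are related by generalized Reidemeister moves on Gauss diagrams, hence represent the same virtual knot.

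I expect this descent to be the main obstacle, because it amounts to asserting that the Polyak relations are neither too coarse nor too fine: broad enough to force $J$ to descend, yet tight enough that no two inequivalent virtual knot classes are identified in $\mathcal{P}$. My approach would be to show that modulo the Polyak relations every representative of $I(D)$ can be rewritten in a canonical form determined by a minimal Gauss diagram of the underlying virtual knot, thereby reducing completeness to the uniqueness of such canonical representatives up to Reidemeister equivalence. This is the step I would expect to require the heaviest combinatorial work and where I would lean on the Gauss-diagram calculus developed in \cite{GPV}.
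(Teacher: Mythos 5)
First, note that the paper does not prove this statement at all: it is quoted as Fact~\ref{thm:gpv} directly from \cite{GPV}, so the only fair comparison is with the original Goussarov--Polyak--Viro argument. Your Stage~1 (well-definedness) is fine in outline, since the Polyak relations are by construction the images under $J$ (the paper's version of $I$) of the generalized Reidemeister moves performed inside a disk, with the rest of the diagram arbitrary; invariance of $[J(D)]\in\mathcal P$ is essentially definitional once that is said. Your key observation in Stage~2 is also correct: $J$ is an automorphism of $\mathbb Z\mathcal D$, inverted by the inclusion--exclusion formula you wrote down.

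The genuine gap is in how you handle the ``descent'' step. You treat injectivity as requiring a canonical form for representatives of $I(D)$ modulo the Polyak relations, ``determined by a minimal Gauss diagram'' of the virtual knot, and you expect heavy combinatorial work there. No such canonical or minimal form is known (producing one would amount to a normal-form solution of the virtual knot recognition problem), and none is needed. The GPV argument is purely algebraic: let $R\subset\mathbb Z\mathcal D$ be the submodule generated by all differences $D-D'$ with $D,D'$ related by a single generalized Reidemeister move of Gauss diagrams. Since the relations defining $\mathcal P$ are exactly the $J$-images of these differences, the automorphism $J$ carries $R$ onto the relation submodule, hence induces an isomorphism $\mathbb Z\mathcal D/R\;\cong\;\mathcal P$. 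But $\mathbb Z\mathcal D/R$ is the free $\mathbb Z$-module on the set of equivalence classes of Gauss diagrams, i.e.\ on the set of virtual knots, in which distinct knots are distinct basis elements; therefore $[J(D_1)]=[J(D_2)]$ in $\mathcal P$ forces $D_1$ and $D_2$ to represent the same virtual knot. This replaces the rewriting-to-canonical-form program you propose, which as stated is not a workable route; your sketch identifies the right statement to prove but not the mechanism that proves it.
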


By using Fact~\ref{thm:gpv}, we have Corollary \ref{coro:compFn}.

\begin{corollary}
The set of finite type invariants of $F$-order gives a complete invariant of virtual knots. 
\label{coro:compFn}
\end{corollary}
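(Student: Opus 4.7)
The plan is to derive the corollary directly from Fact~\ref{thm:gpv} and Theorem~\ref{thm:GPVFn}, with no additional combinatorial or diagrammatic work. First I would take two distinct virtual knots $K \ne K'$ and invoke Fact~\ref{thm:gpv}: because the Polyak-algebra valued map $K \mapsto I(D) \in \mathcal{P}$ is a complete invariant, there is some linear functional on $\mathcal{P}$ that separates $I$-images of $K$ and $K'$. Since elements of $\mathcal{P}_n^{*}$ are precisely finite type invariants of $\gpv$-order $\le n$, this functional produces a concrete $v^{\gpv}_m$, for some finite $m$, with $v^{\gpv}_m(K) \ne v^{\gpv}_m(K')$.

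Next I would feed this particular $v^{\gpv}_m$ into Theorem~\ref{thm:GPVFn}, which asserts the containment
\[
\{v \mid v = v^{\gpv}_i,\ i \le 2n+1\} \subset \{v \mid v = v^{F}_i,\ i \le n\}.
\]
Choosing any $n$ with $2n+1 \ge m$ (for instance $n = m$) shows that the chosen $v^{\gpv}_m$ is itself a finite type invariant of $F$-order at most $n$. Hence $v^{\gpv}_m$ belongs to the set $\{v \mid v = v^{F}_i,\ i \in \mathbb{N}\}$, and that set separates $K$ from $K'$.

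Since the pair $K, K'$ was arbitrary, the family of all finite type invariants of $F$-order is a complete invariant of virtual knots, which is the content of Corollary~\ref{coro:compFn}. I do not expect any real obstacle: the two deep inputs, namely completeness of the GPV invariants (Fact~\ref{thm:gpv}) and the inclusion of GPV invariants into $F$-order invariants (Theorem~\ref{thm:GPVFn}, whose technical heart is Lemma~\ref{lem:F2GPV} realizing a forbidden move as a pair of virtualizations and thus expanding each semi-triple point into four semi-virtual crossings), have already been established. The corollary is then a one-line consequence of combining these two facts.
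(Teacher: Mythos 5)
Your proposal is correct and matches the paper's own argument: the paper also obtains this corollary by combining Fact~\ref{thm:gpv} (completeness of the GPV/Polyak-algebra invariants) with the inclusion of Theorem~\ref{thm:GPVFn}, exactly as you do. Your write-up merely spells out the separating-invariant step that the paper leaves implicit.
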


\begin{fact}[\cite{CKR}]
For $n \geqq 1$, the coefficient $c_{2n}$ of $z^{2n}$ in the Conway polynomial of
a knot $K$ with the Gauss diagram $G$ is equal to
$$
c_{2n} = \langle {\mathfrak C}_{2n}, G \rangle.
$$
\label{thm:conway}
\vspace{-0.5cm}
\end{fact}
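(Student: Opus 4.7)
The plan is to reduce the identity to a comparison between two objects that are each uniquely characterized by the same data: the Conway skein relation together with the normalization on the unknot. The Conway polynomial $\nabla_K(z)=\sum_{n\ge 0}c_{2n}(K)z^{2n}$ is determined by $\nabla_{\text{unknot}}=1$ and $\nabla_{K_+}-\nabla_{K_-}=z\,\nabla_{K_0}$, and expanding in $z$ gives, for each $n\ge 1$, a recursion for $c_{2n}$ in terms of $c_{2n-2}$ of the smoothing. It therefore suffices to show that the right hand side $\langle \mathfrak{C}_{2n},G\rangle$ is (i) a well-defined invariant of classical knots, (ii) vanishes on the unknot for $n\ge 1$, and (iii) obeys the same skein recursion.

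First, I would write down the explicit combinatorial definition of $\mathfrak{C}_{2n}\in\mathbb{Z}\mathcal{D}$: it is a prescribed signed sum of Gauss diagrams with exactly $2n$ chords whose intersection pattern forms a specific descending configuration, with signs determined by the arrow directions and crossing signs. Then I would verify invariance of $\langle\mathfrak{C}_{2n},-\rangle$ under the three classical Reidemeister moves interpreted on Gauss diagrams, by grouping sub-diagrams of $G$ and $G'$ into pairs that cancel for R1 (isolated chord contributions vanish), R2 (the two newly introduced chords create a sign-reversing involution), and R3 (a telescoping cancellation using the chord-interleaving pattern inside $\mathfrak{C}_{2n}$). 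The initial condition is immediate, since the empty Gauss diagram has no sub-diagram isomorphic to any summand of $\mathfrak{C}_{2n}$ when $n\ge 1$.

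The main step, and the chief obstacle, is establishing the Gauss-diagrammatic version of the skein relation. For a distinguished chord $c$ of $G$ corresponding to a real crossing, one must partition the sub-Gauss diagrams of $G$ into those containing $c$ and those not containing $c$; the latter contribute equally to both $G_+$ and $G_-$, so $\langle\mathfrak{C}_{2n},G_+\rangle-\langle\mathfrak{C}_{2n},G_-\rangle$ reduces to a sum over sub-diagrams containing $c$ with a sign flip. The challenge is that the smoothed diagram $G_0$ is not obtained from $G_\pm$ by deleting a chord, but by a surgery that changes the underlying circle; the argument therefore requires exhibiting a sign-preserving bijection between sub-diagrams of $G_\pm$ containing $c$ (paired with the remaining $2n-1$ chords of $\mathfrak{C}_{2n}$) and sub-diagrams of $G_0$ (paired with the $2n-2$ chords of $\mathfrak{C}_{2n-2}$, with the extra factor of $z$ absorbed into the degree shift). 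This is where the precise combinatorial form of $\mathfrak{C}_{2n}$ matters, since the pattern must be designed so that removing the distinguished chord and reconnecting strands corresponds exactly to the $\mathfrak{C}_{2n-2}$ pattern.

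With invariance, normalization, and the skein recursion in hand, an induction on $n$ (and, at each fixed $n$, on the number of chords of $G$) concludes that $\langle\mathfrak{C}_{2n},G\rangle=c_{2n}(K)$, which is the claimed identity. I would expect the combinatorial bookkeeping in the skein step to be the only genuinely delicate part; the Reidemeister invariance, while tedious, is a finite case check.
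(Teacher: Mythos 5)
This statement is not proved in the paper at all: it is quoted as a Fact from Chmutov--Khoury--Rossi \cite{CKR}, so the only fair comparison is between your sketch and what a complete proof would require. Your outline (invariance $+$ normalization $+$ skein recursion $+$ induction) is the right general strategy and is close in spirit to how such Gauss diagram formulas are established, but as written it has genuine gaps rather than being a proof. Most importantly, you never define $\mathfrak{C}_{2n}$; the entire content of the statement is concentrated in that specific linear combination of (based) arrow diagrams, and every one of your three verification steps --- Reidemeister invariance, vanishing on the unknot, and the skein identity --- depends on its precise form. You yourself flag the skein step as ``the chief obstacle'' and then do not resolve it; exhibiting the sign-preserving correspondence between sub-diagrams of $G_\pm$ containing the distinguished chord and sub-diagrams of the smoothed diagram is exactly the nontrivial part of \cite{CKR}, so the argument cannot be accepted with that step left as a description of what would need to be done. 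The R3 case of invariance is likewise not a routine finite check for an unspecified pattern; for pairings of this kind it is precisely where most candidate formulas fail.

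There is also a structural problem with the induction as you set it up. Smoothing a self-crossing of a knot produces a two-component link, so the recursion $c_{2n}(K_+)-c_{2n}(K_-)=c_{2n-1}(K_0)$ leaves the class of knots: a formula for knots alone cannot close the induction. A correct proof must simultaneously establish companion Gauss diagram formulas for (at least) two-component links, or otherwise reinterpret the smoothed object, which is how \cite{CKR} proceed. Finally, note that the pairing in the present paper is with \emph{based} (long) Gauss diagrams, and the extension of the Conway polynomial and of these formulas to long virtual knots rests on Polyak's invariance result \cite{P}; your sketch implicitly works with unbased classical diagrams and does not address this. So the proposal identifies a reasonable road map but does not supply the definition or the combinatorial arguments that constitute the actual proof.
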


The definition of the Conway polynomial is extended to that of long virtual knots 
\cite{CKR,P}. 
The invariance of the Conway polynomial of long virtual knots is obtained by
Polyak \cite{P}.
This result of Polyak with the skein relation given by \cite{CKR}
implies that the Gauss diagram formula of \cite{CKR} are extended to
those of long virtual knots. By the definition of ${\gpv}$-invariants, we have Proposition~\ref{prop:gpvVa}.

\begin{notation}
For a Gauss diagram $H$, let $|H|$ be the number of chords.  
\label{nota:gauss}
\end{notation}


\begin{proposition}
Let $G$ be a Gauss diagram of a virtual knot.
Let $\langle \sum_{i=1}^{t} \lambda _i A_i, G \rangle$ be a Gauss diagram formula of a
virtual knot and $n_0=\max \{ |A_i| | 1 \leq i \leq t \}$.
Then, $\langle \sum_{i=1}^{t} \lambda _i A_i, G \rangle$ is  a finite type invariant of ${\gpv}$-order $\leq n_0$.
\label{prop:gpvVa}
\end{proposition}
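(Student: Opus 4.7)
The plan is to translate the definition of $\gpv$-order into a vanishing statement for Gauss diagrams carrying several dashed chords, and then to conclude by a straightforward counting argument. Since the pairing $\langle \cdot,\cdot\rangle$ is linear in its first argument, it suffices to show that for each single Gauss diagram $A$ with $|A|\le n_0$ the Gauss diagram formula $\langle A,\cdot\rangle$ has $\gpv$-order $\le n_0$; the conclusion for $\sum_i\lambda_i A_i$ then follows by $\mathbb{Z}$-linearity.

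First I would fix a virtual knot diagram $D$ with $n_0+1$ distinguished real crossings $d_1,\dots,d_{n_0+1}$. By the identification (\ref{align:semiv}), a semi-virtual crossing at $d_i$ is the formal difference of the real crossing and the virtual crossing. At the Gauss diagram level the real crossing contributes its signed chord $c_i$ while the virtual crossing contributes no chord, so semi-virtualizing all of the $d_i$ yields the formal combination
\[
G^{S}\;=\;\sum_{T\subset\{1,\dots,n_0+1\}}(-1)^{(n_0+1)-|T|}\,G_T,
\]
where $G_T$ is the Gauss diagram whose chord set is $\{c_i:i\in T\}\cup\{e_1,\dots,e_r\}$ and $e_1,\dots,e_r$ are the remaining chords of the Gauss diagram of $D$.

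Next I would compute $J(G^{S})$ by exchanging the order of summation. A sub-Gauss diagram $D'=\{c_i:i\in T'\}\cup E'$ appears in $J(G_T)$ if and only if $T'\subset T$ and $E'\subset\{e_1,\dots,e_r\}$, so the coefficient of $D'$ in $J(G^{S})$ equals $\sum_{T\supset T'}(-1)^{(n_0+1)-|T|}$. Setting $m=(n_0+1)-|T'|$ and using $\sum_{k=0}^{m}(-1)^{m-k}\binom{m}{k}=\delta_{m,0}$, this coefficient vanishes unless $T'=\{1,\dots,n_0+1\}$. Therefore
\[
J(G^{S})\;=\;\sum_{E'\subset\{e_1,\dots,e_r\}}\bigl(\{c_1,\dots,c_{n_0+1}\}\cup E'\bigr),
\]
which is a $\mathbb{Z}$-sum of sub-Gauss diagrams each containing at least $n_0+1$ chords. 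Since $(A,D')\ne 0$ forces $A=D'$ and hence $|A|=|D'|\ge n_0+1$, the hypothesis $|A|\le n_0$ gives $(A,D')=0$ for every summand; thus $\langle A,G^{S}\rangle=0$, which is exactly the vanishing condition that defines $\gpv$-order $\le n_0$.

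The conceptual content is the inclusion-exclusion computation in the middle step; once it is performed, the conclusion reduces to the trivial inequality $|A|\le n_0<n_0+1$. I do not expect any serious obstacle. The main point requiring care is the bookkeeping: one must verify that the diagrammatic identification (\ref{align:semiv}) truly corresponds to the ``chord present minus chord absent'' operation on Gauss diagrams, and that the sign of each chord $c_i$ (recorded by the convention of Figure \ref{fig:sign}) is preserved throughout the expansion, so that the summands $G_T$ in the display above are genuine sub-Gauss diagrams of the originally given Gauss diagram of $D$.
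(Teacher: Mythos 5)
Your proof is correct and follows essentially the same route as the paper: expand the $n_0+1$ semi-virtual crossings into the alternating sum over subsets of kept chords, exchange the order of summation in $J$, use the binomial identity $(1-1)^m=0$ to kill every sub-diagram missing a distinguished chord, and conclude from $|A|\le n_0<n_0+1$ that the remaining pairings vanish. Your bookkeeping (computing the coefficient of each sub-Gauss diagram in $J(G^S)$ directly) is just a cleaner organization of the paper's exchange over $z\prec G_0$ and $G'(z)$, so no substantive difference.
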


\begin{proof}
Without loss of generality, we may $|A_1|=n_0$ and then
we may denote $A_1$ by $A$ in the following.
Suppose that a diagram $D_m^{(p)}$ has $p$ semi-virtual crossings and $m-p$ real crossings.
Let $D_0$ be a virtual knot diagram obtained by replacing each semi-virtual crossing 
with the real crossing.
Let $G_0$ be a Gauss diagram corresponding to $D_0$.
Suppose that $n_0 < p$.

Note that $p$ chords of $G_0$ are obtained by replacing  semi-virtual crossings with real crossings.
Then, if $G'$ is given by erasing some of $p$ chords of $G$, 
we denote the relation by $G' \prec G$. By definition, if $G' \prec G$, the $G' \subset G$.  

By definition, we have a linear sum of virtual knot diagrams
obtained by $D^{(p)}_m$ by 
$
\parbox{30pt}{\includegraphics[width=30pt]{semi.pdf}}~
=~\parbox{30pt}{\includegraphics[width=30pt]{crossing1.pdf}}~
-~\parbox{30pt}{\includegraphics[width=30pt]{virtual2.pdf}}.
$
The linear sum of virtual knot diagrams gives the linear sum $\sum_{G' \prec G_0} (-1)^{|G_0|-|G'|}G'$. In order to obtain the claim of Proposition \ref{prop:gpvVa}, it is sufficient to show that $\langle A, \sum_{G' \prec G_0} (-1)^{|G_0|-|G'|}G'\rangle =0$. 

\begin{align}
\sum_{G' \prec G_0} (-1)^{|G_0|-|G'|}\langle A, G'\rangle
&= \sum_{G' \prec G_0} (-1)^{|G_0|-|G'|} \sum_{z \subset G'}( A, z )   \nonumber \\   
&= \sum_{z \subset G_0} \sum_{G'(z) \prec G_0} (-1)^{|G_0|-|G'(z)|} ( A, G'(z) )
\label{gaussF}
\end{align}

Let $q=|G_0|-|z|$. Let $c_1,c_2 \ldots , c_q$ be real crossings which are 
erased to obtain $z$ where $z \prec G_0$.
Then, suppose that the crossings witch are added to $z$ to obtain $G'(z)$
are $c_1', c_2', \ldots , c'_k \in \{ c_i ~|~ 1\leq i \leq q \}$.
We define $\sigma_i (1\leq i \leq q)$ by $\sigma_i =1$ if $c_i \in \{ c_i' ~|~ 1\leq i \leq k \}$
and $\sigma_i =0$ otherwise. Let $z(\sigma_1, \ldots , \sigma_q)=(-1)^{|G_0|-|z|}(-1)^{-|\sigma_1 + \ldots + \sigma_q|}$. It is easy to see that the following claim.

\noindent
{\bf Claim.}
\[z(\sigma_1, \ldots , \sigma_q)=(-1)^{|G_0|-|G'(z)|}.\]
Further, Let $k=|\sigma_1+ \cdots + \sigma_q|$.
Then, \[\sum_{G'(z) \prec G_0}(-1)^{|G_0|-|G'(z)|}= \sum_{k=0}^q (-1)^{q-k} 
\left(
\begin{matrix}
q\\
k
\end{matrix}
\right). \]

\begin{align*}
{\textrm{The right of~}}(\ref{gaussF})
=&\sum_{z \prec G_0} \left\{ \sum_{0< p-q \leq n_0} z(\sigma_1, \ldots , \sigma_q)(A, z)  
\right\}
\\
&+z(\sigma_1, \ldots , \sigma_p)(A, z) + \sum_{p-q > n_0} z(\sigma_1, \ldots , \sigma_q)(A, z)
\end{align*}

Here, by the assumption, we have $n_0<p$, which implies $|A|\neq |z|$.
Then $$z(\sigma_1, \ldots , \sigma_p)(A, z)=0.$$
Since $|z|>n_0=|A|$, 
$$
\sum_{p-q > n_0} z(\sigma_1, \ldots , \sigma_q)(A, z)=0.
$$
Further, by the above claim again,  
\begin{align*}
\sum_{G' \subset G_0} (-1)^{|G_0|-|G'|}\langle A, G'\rangle
&=\sum_{z \prec G_0} \left\{ \sum_{0< p-q \leq n} z(\sigma_1, \ldots , \sigma_q)(A, z)  
\right\}\\
&=\sum_{z \prec G_0} (A, z) \left\{ \sum_{k=0}^q (-1)^{q-k} 
\left(
\begin{matrix}
q\\
k
\end{matrix}
\right)
\right\} \\
&=\sum_{z \prec G_0} (A, z) \left\{ 1+(-1) \right\}^q \\
&=0.
\end{align*}
\end{proof}

\noindent ({\bf Proof of Theorem~\ref{thm3}}.) 
By using Proposition~\ref{prop:gpvVa}, Fact~\ref{thm:conway}, and the following argument, we have Proposition \ref{prop:gpv&gpv} which implies Theorem~\ref{thm3}.

By Theorem~\ref{thm2}, $c_{2n} \in \{v~|~v=v_{2m}^{\gpv} \quad (m \le n) \}$ $\subset \{v~|~v=v_{m}^{F} \quad (m \le n) \}$.  Thus, letting $\lambda^{(2m)}_i$ ($0 \le i \le m$) be coefficients, we have 
\[
c_{2m} = \lambda^{(2m)}_{m} v_m^F + \sum_{i \le m-1} \lambda^{(2m)}_i v_i^F.  
\]
In particular,
\[
c_{2n+2} = \lambda^{(2n+2)}_{n+1} v^F_{n+1} + \sum_{i \le n} \lambda^{(2n+2)}_i v_i^F. 
\]
Here, we suppose that $\lambda^{(2n)}_{n} \neq 0$, and will prove $\lambda^{(2n+2)}_{n+1} \neq 0$ for the induction.  
By using the assumption of the induction, there exist coefficients $\mu_i$ ($0 \le i \le m$) such that 
\begin{align}
c_{2n+2} &= \lambda^{(2n+2)}_{n+1} v^F_{n+1} + \sum_{i \le n} \mu_i c_{2i}.  
\label{aling:c2n+2}
\end{align}
If $\lambda^{(2n+2)}_{n+1} = 0$, then, for $K_{2n+2}$,
Equation (\ref{aling:c2n+2}) implies 
\[ -2 =  LHS = RHS = 0, \]
which implies a contradiction.   
Therefore, $c_{2n} \in \set{v| v=v_{2n}^{\gpv}} $.

$\hfill\Box$

\end{document}